\newtheorem{theorem}{Theorem}[section]
\newtheorem{lemma}[theorem]{Lemma}
\newtheorem{corollary}[theorem]{Corollary}
\newcommand{\transf}[2]{{T[#1,#2]}}
\newcommand{\UnitDisk}{{\mathbb D}}
\newcommand{\UnitCircle}{{\partial \UnitDisk}}
\newcommand{\wco}{{W_{\psi,\varphi}}}
\newcommand{\e}{{\mathrm e}}
\newcommand{\BD}{\mathbb{D}}
\newcommand{\ra}{\rightarrow}
\begin{document}

\title{The essential norm of a weighted composition operator on BMOA}

\author{Jussi Laitila and Mikael Lindstr\"{o}m}
\address{\noindent Department of Biosciences, University of Helsinki, P.O. Box 65, FI-00014 University of Helsinki, Finland}
\email{jussi.laitila@helsinki.fi}
\address{\noindent Department of Mathematical Sciences, University of Oulu, P.O. Box 3000,  FI-90014 University of Oulu, Finland}
\email{mikael.lindstrom@oulu.fi}
\subjclass[2010]{47B33, 47B38}
\keywords{BMOA, compactness, composition operator, essential norm, pointwise multiplier, VMOA, weighted composition operator}

\maketitle

\begin{abstract}
We provide an estimate for the essential norm of a weighted composition operator $\wco\colon f\mapsto \psi(f\circ\varphi)$ acting on the space $BMOA$ in terms of the weight function $\psi$ and the $n$-th power $\varphi^n$ of the analytic self-map $\varphi$ of the open unit disc $\BD$. We also provide a new estimate for the norm of the weighted composition operator on $BMOA$.
\end{abstract}

\section{Introduction}

Let $\BD$ be the open unit disc in the complex plane  and denote by $H(\BD)$ the space of all analytic functions $\UnitDisk \to \mathbb C$. Throughout the paper, $\psi$ will denote a function in $H(\BD)$, and $\varphi$
will be an analytic self-map of $\BD$, $\varphi(\UnitDisk)\subset \UnitDisk$. These maps induce a linear weighted composition operator 
$W_{\psi,\varphi}$  which is defined on $H(\BD)$ by 
\begin{align*}
(\wco f)(z)=(M_\psi C_\varphi f)(z)=\psi(z)f(\varphi(z)), \quad z\in\BD,
\end{align*}
where $M_\psi$ is the operator of pointwise multiplication by $\psi$, and $C_\varphi$ is the composition operator 
$f\mapsto f\circ\varphi$. The main aim of this paper is to complement the literature on weighted composition operators by providing a function-theoretic estimate for the essential norm of a weighted composition operator $\wco$ on the space $BMOA$, which consists of the analytic functions on $\BD$ that are of bounded mean oscillation on the unit circle $\UnitCircle$. 

Let $H^p$  ($1\le p < \infty$) be the classical Hardy space of functions $f\in H(\BD)$ that satisfy
$$ \Vert f \Vert_{p} = \lim_{r\to 1} \left(\int_0^{2\pi} |f(r\e^{i\theta})|^p \,\frac{\mathrm d\theta}{2\pi}\right)^{1/p}$$
and let $H^\infty$ by the space of bounded analytic functions on $\UnitDisk$ endowed with the sup norm. 
Then $f\in H^2$ belongs to the space $BMOA$ provided 
that the $BMOA$ seminorm 
$$\Vert f\Vert_{\ast}=\sup_{a \in \BD} \Vert \transf{f}{a} \Vert_2$$
is finite. 
Here $\transf{f}{a} = f\circ\sigma_a - f(a)$, where $\sigma_a(z) = (a -z)/(1 - \overline a z)$ is the conformal automorphism of $\BD$ that 
exchanges the points $0$ and $a$.
The quantity $\Vert f \Vert = |f(0)| + \Vert f \Vert_{\ast}$ is a complete norm on $BMOA$. 
The closed subspace $VMOA$ consists of the analytic functions having vanishing mean oscillation on $\partial\BD$, or equivalently, of the functions $f\in BMOA$ such that 
$$\lim_{|a|\to 1}  \Vert \transf{f}{a} \Vert_2 = 0.$$
We refer to \cite{gar}, \cite{gir} and \cite{zhu} for the basic properties of the spaces $BMOA$ and $VMOA$.

Boundedness and compactness properties of pointwise multipliers $M_\psi$ and composition operators $C_\varphi$ on $BMOA$ and $VMOA$ have been intensively investigated
in the literature; see, for example, \cite{steg}, \cite{smith}, \cite{lai1}, \cite{wulan1}, \cite{lai2}, \cite{wulan}, \cite{LNST}. 
In \cite{lai2}, function-theoretic characterizations were given for the bounded and compact weighted composition operators on both $BMOA$ and $VMOA$. In addition, the following estimates were given for the norm of $\wco$ on $BMOA$ and its essential norm on the subspace $VMOA$:
\begin{align}\label{eq_norm_bmoa}
\Vert \wco \Vert_{L(BMOA)} \asymp |\psi(0)|L(\varphi(0)) + \sup_{a\in\UnitDisk} \alpha(a) + \sup_{a\in\UnitDisk}\beta(a);
\end{align}
\begin{align}\label{eq_ess_norm_vmoa}
\Vert \wco \Vert_{e,VMOA} \asymp\limsup_{|a|\to 1} \alpha(a) + \limsup_{|a|\to 1}\beta(a).
\end{align}
Here we have used the abbreviations
$$\alpha(a)=|\psi(a)| \Vert \sigma_{\varphi(a)}\circ\varphi\circ\sigma_a \Vert_2  \quad \mathrm{and}   \quad  \beta(a)=L(\varphi(a))\Vert \psi\circ\sigma_a - \psi(a)\Vert_2,$$
where 
$L(a)=\log\frac{2}{1-|a|^2}$.
However, the estimate \eqref{eq_ess_norm_vmoa} does not in general extend to $BMOA$, except for symbols $\psi$ and $\varphi$ for which $\wco$ preserves $VMOA$.

Recently, new types of compactness characterizations and essential norm estimates have been given on many function spaces 
in terms of the $n$-th powers $\varphi^n$ of the symbol $\varphi$; see \cite{zhao}, \cite {manhas}, \cite{mik1} and \cite{mik2}.
Continuing this line of research, Colonna \cite{col} provided the following modifications of results from \cite{lai2}, which in particular simplify the
compactness characterization on $BMOA$: 
The operator $\wco$ is bounded on $BMOA$ if and only if
$\sup_{n\ge 0} \Vert \psi \varphi^n \Vert_\ast<\infty$ and $\sup_{a\in\UnitDisk} \beta(a)<\infty$, and it is compact on $BMOA$ if and only if
\begin{align*}
\limsup_{n\to \infty} \Vert \psi \varphi^n \Vert_\ast =0 \quad \textrm{ and } \quad \limsup_{|\varphi(a)|\to 1}\beta(a) = 0.
\end{align*}
Very recently Pablo Galindo and the authors of this paper \cite{GLL} provided an estimate for the essential norm of a composition operator on $BMOA$ in terms of the $BMOA$ norm of $\varphi^n$.
This motivates the natural question of whether such an estimate could be extended for weighted composition operators. We shall give a positive answer to this question by extending the approach
from \cite{GLL} and combining it with earlier works \cite{lai2} and \cite{col} on weighted composition operators.  

Recall that the essential norm $\|T\|_{e,X}$ of a bounded operator $T\colon X\ra X$ is defined as the distance from $T$
to the space of compact operators on $X$. For two quantities $A$ and $B$ which may depend on $\varphi$ and $\psi$, we use the 
abbreviation $A \lesssim B$ whenever there is a positive constant $c$ (independent of $\varphi$ and $\psi$) such that $A \le c B$. We write $A \asymp B$, if $A \lesssim B \lesssim A$.
For basic results about composition operators on classical spaces of analytic functions, we refer the  reader to the monographs \cite{shapiro1} and \cite{comac}. 

We next formulate our main results. Theorem \ref{mainthm1} provides a new estimate for the norm of a weighted composition operator on $BMOA$.
Theorem \ref{mainthm2} provides an estimate for the essential norm $\Vert \wco \Vert_{e,BMOA}$.

\begin{theorem}\label{mainthm1} Suppose that $\wco$ is bounded on $BMOA$. Then
\begin{align*}
\Vert \wco \Vert_{L(BMOA)} \asymp |\psi(0)|L(\varphi(0)) + \sup_n \Vert \psi \varphi^n \Vert_\ast + \sup_{a\in\UnitDisk}\beta(a).
\end{align*}
\end{theorem}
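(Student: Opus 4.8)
The plan is to prove the two estimates $A\lesssim\Vert\wco\Vert_{L(BMOA)}$ and $\Vert\wco\Vert_{L(BMOA)}\lesssim A$ separately, where $A=|\psi(0)|L(\varphi(0))+\sup_n\Vert\psi\varphi^n\Vert_\ast+\sup_{a\in\UnitDisk}\beta(a)$ is the right-hand side. Since the terms $|\psi(0)|L(\varphi(0))$ and $\sup_a\beta(a)$ already occur in the known estimate \eqref{eq_norm_bmoa}, the task reduces to swapping the quantity $\sup_a\alpha(a)$ appearing there for $\sup_n\Vert\psi\varphi^n\Vert_\ast$. Concretely, I would establish $\sup_n\Vert\psi\varphi^n\Vert_\ast\lesssim\Vert\wco\Vert$ (for the lower bound) and $\sup_a\alpha(a)\lesssim A$ (for the upper bound, via \eqref{eq_norm_bmoa}).

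The lower bound $A\lesssim\Vert\wco\Vert$ is the easy half. Applying $\wco$ to the monomials $z\mapsto z^n$, which satisfy $\Vert z^n\Vert\lesssim1$ uniformly in $n$ because $\Vert z^n\Vert_\ast\le 2\Vert z^n\Vert_\infty=2$, one gets $\Vert\psi\varphi^n\Vert_\ast=\Vert\wco(z^n)\Vert_\ast\le\Vert\wco(z^n)\Vert\le\Vert\wco\Vert\,\Vert z^n\Vert\lesssim\Vert\wco\Vert$, hence $\sup_n\Vert\psi\varphi^n\Vert_\ast\lesssim\Vert\wco\Vert$. The remaining two terms of $A$ are dominated by $\Vert\wco\Vert$ directly through \eqref{eq_norm_bmoa}.

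For the upper bound it suffices, again by \eqref{eq_norm_bmoa}, to prove $\sup_a\alpha(a)\lesssim A$. I would fix $a$ and write $g=\varphi\circ\sigma_a$, $w=\varphi(a)=g(0)$ and $h=\sigma_w\circ g=\sigma_{\varphi(a)}\circ\varphi\circ\sigma_a$, so that $\alpha(a)=|\psi(a)|\,\Vert h\Vert_2$. Since $\transf{\psi\varphi^n}{a}=(\psi\circ\sigma_a)g^n-\psi(a)w^n$, the identity
\begin{align*}
\psi(a)(g^n-w^n)=\transf{\psi\varphi^n}{a}-\big((\psi\circ\sigma_a)-\psi(a)\big)g^n,
\end{align*}
together with $\Vert g^n\Vert_\infty\le1$ and $L(\varphi(a))\ge\log2$, gives for every $n$
\begin{align*}
|\psi(a)|\,\Vert g^n-w^n\Vert_2\le\Vert\psi\varphi^n\Vert_\ast+\Vert\psi\circ\sigma_a-\psi(a)\Vert_2\lesssim\sup_n\Vert\psi\varphi^n\Vert_\ast+\sup_a\beta(a).
\end{align*}
Taking the supremum over $n$ yields $|\psi(a)|\sup_n\Vert g^n-w^n\Vert_2\lesssim A$, so everything hinges on recovering $\Vert h\Vert_2$ from the powers $g^n-w^n$.

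The heart of the matter, and what I expect to be the main obstacle, is the purely function-theoretic inequality
\begin{align*}
\Vert\sigma_w\circ g\Vert_2\lesssim\sup_n\Vert g^n-w^n\Vert_2
\end{align*}
for an analytic self-map $g$ of $\UnitDisk$ with $g(0)=w$, valid with an absolute constant; multiplying by $|\psi(a)|$ then gives $\alpha(a)\lesssim A$, and feeding $\sup_a\alpha(a)\lesssim A$ back into \eqref{eq_norm_bmoa} completes the proof. When $|w|\le\tfrac12$ this is immediate from $n=1$: the automorphism $\sigma_w$ is then bi-Lipschitz with absolute constants, whence $\Vert\sigma_w\circ g\Vert_2\lesssim\Vert g-w\Vert_2=(\Vert g\Vert_2^2-|w|^2)^{1/2}\le\sup_n\Vert g^n-w^n\Vert_2$, using the orthogonality identity $\Vert g^n-w^n\Vert_2^2=\Vert g^n\Vert_2^2-|w|^{2n}$. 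The genuine difficulty, and the place where the techniques of \cite{GLL} enter, is the boundary regime $|w|\to1$, where a single power no longer suffices and one must instead choose $n\asymp1/(1-|w|)$. The guiding heuristic is that a large value of $\Vert\sigma_w\circ g\Vert_2$ forces $|g|$ close to $1$ on a set of large measure on $\UnitCircle$, which in turn makes $\Vert g^n\Vert_2$ large for this $n$; converting this into the stated uniform bound—while controlling the region where $g$ points in the direction of $w$, where $|1-\overline w\,g|$ degenerates to order $1-|w|$—is the technical core of the argument.
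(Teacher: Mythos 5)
Your overall skeleton is sound and matches the paper's: the lower bound via the test functions $z\mapsto z^n$ together with \eqref{eq_norm_bmoa} is exactly what the paper does, and for the upper bound the paper likewise reduces everything to replacing $\sup_a\alpha(a)$ in \eqref{eq_norm_bmoa} by $\beta$- and $\Vert\psi\varphi^n\Vert_\ast$-terms. Your identity $\psi(a)(g^n-w^n)=\transf{\psi\varphi^n}{a}-(\psi\circ\sigma_a-\psi(a))g^n$ and the resulting bound $|\psi(a)|\,\Vert g^n-w^n\Vert_2\lesssim\sup_n\Vert\psi\varphi^n\Vert_\ast+\sup_a\beta(a)$ are correct. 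The problem is that you have left the one step on which everything hinges --- the inequality $\Vert\sigma_w\circ g\Vert_2\lesssim\sup_{n\ge1}\Vert g^n-w^n\Vert_2$ for a self-map $g$ with $g(0)=w$ --- unproved except when $|w|\le\tfrac12$, and the route you sketch for $|w|\to1$ (choosing $n\asymp1/(1-|w|)$ and running measure estimates on level sets of $|g|$) is only a heuristic, not an argument. As written, this is a genuine gap.

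The gap is, however, fillable in one line, and the filling is precisely the device the paper uses in Lemma \ref{lemmaupperest2}. Expanding the automorphism in a geometric series gives
\begin{align*}
\sigma_w(z)-w=(|w|^2-1)\sum_{n\ge 0}\overline{w}^{\,n}z^{n+1},
\qquad\text{hence}\qquad
\sigma_w\circ g=\sigma_w\circ g-\sigma_w(w)=(|w|^2-1)\sum_{n\ge 0}\overline{w}^{\,n}\bigl(g^{n+1}-w^{n+1}\bigr),
\end{align*}
so the triangle inequality yields $\Vert\sigma_w\circ g\Vert_2\le(1-|w|^2)(1-|w|)^{-1}\sup_{n\ge1}\Vert g^n-w^n\Vert_2\le 2\sup_{n\ge1}\Vert g^n-w^n\Vert_2$, uniformly in $w$. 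With this inserted, your proof closes. Note that the paper arranges the same two ingredients in the opposite order: it first shows $\alpha(a)\le 2\beta(a)/L(\varphi(a))+\Vert\wco f_a\Vert_\ast$ with $f_a=\sigma_{\varphi(a)}-\varphi(a)$ (using $\Vert f_a\circ\varphi\circ\sigma_a\Vert_\infty\le2$), and then applies the series expansion to $\wco f_a=(|\varphi(a)|^2-1)\sum_n\overline{\varphi(a)}^{\,n}\psi\varphi^{n+1}$ to get $\Vert\wco f_a\Vert_\ast\le2\sup_n\Vert\psi\varphi^n\Vert_\ast$ (Lemma \ref{lemmaupperest}(i) and Lemma \ref{lemmaupperest2}); the two routes are essentially equivalent, but you should not present the key inequality as a hard open technical core when it is an immediate consequence of the expansion already exploited in \cite{GLL}.
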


\begin{theorem}\label{mainthm2} Suppose that $\wco\colon BMOA \to BMOA$ is bounded. Then
\begin{align*}
\Vert \wco \Vert_{e,BMOA} \asymp \limsup_{n\to\infty}  \Vert \psi \varphi^n \Vert_{\ast} + \limsup_{|\varphi(a)|\to 1}\beta(a).
\end{align*}
\end{theorem}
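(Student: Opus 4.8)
The plan is to prove the two-sided estimate by establishing the lower bound $\Vert\wco\Vert_{e,BMOA}\gtrsim\limsup_n\Vert\psi\varphi^n\Vert_\ast+\limsup_{|\varphi(a)|\to1}\beta(a)$ and the matching upper bound separately, running the proof in parallel with the norm estimate of Theorem \ref{mainthm1} but retaining only the boundary/tail behaviour so that compact perturbations may be discarded. Throughout I would freely use the $n$-th power machinery already deployed for Theorem \ref{mainthm1} and the approach of \cite{GLL}, together with the function-theoretic lower-bound technique of \cite{lai2}. Since $\max(A,B)\asymp A+B$, it suffices to bound the essential norm below by each of the two quantities separately and above by their sum.

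For the lower bound I would invoke the standard device that if $(f_k)$ is bounded in $BMOA$ and tends to $0$ uniformly on compact subsets of $\BD$ (equivalently weak-star null), then $\Vert Kf_k\Vert\to0$ for every compact $K$, so that $\Vert\wco\Vert_{e,BMOA}\gtrsim\limsup_k\Vert\wco f_k\Vert/\sup_k\Vert f_k\Vert$. Taking $f_k=z^{n_k}$ along a subsequence realizing $\limsup_n\Vert\psi\varphi^n\Vert_\ast$, and noting that the monomials are bounded in $BMOA$, tend to $0$ on compacta, and satisfy $\wco z^{n}=\psi\varphi^{n}$, yields $\Vert\wco\Vert_{e,BMOA}\gtrsim\limsup_n\Vert\psi\varphi^n\Vert_\ast$. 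For the second quantity I would take points $a_k$ with $|\varphi(a_k)|\to1$ realizing $\limsup_{|\varphi(a)|\to1}\beta(a)$ and the normalized test functions $f_k(z)=L(\varphi(a_k))^{-1}\bigl(\log\tfrac{2}{1-\overline{\varphi(a_k)}z}\bigr)^2$, which are bounded in $BMOA$, null on compacta, and satisfy $f_k(\varphi(a_k))\asymp L(\varphi(a_k))$ with $f_k\circ\varphi$ nearly constant near $a_k$. Writing $\wco f_k\circ\sigma_{a_k}=L(\varphi(a_k))\,(\psi\circ\sigma_{a_k})+(\text{error})$ and showing, as in \cite{lai2}, that the error is of lower order, one obtains $\Vert\transf{\wco f_k}{a_k}\Vert_2\gtrsim\beta(a_k)$, hence $\Vert\wco\Vert_{e,BMOA}\gtrsim\limsup_{|\varphi(a)|\to1}\beta(a)$.

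For the upper bound I would approximate $\wco$ by the compact operators $\wco R_N$, where $R_N$ denotes the $N$-th Fej\'er (Ces\`aro) mean; these are uniformly bounded on $BMOA$ (convolution with a positive summability kernel) and of finite rank, whence $\Vert\wco\Vert_{e,BMOA}\le\limsup_N\Vert\wco(I-R_N)\Vert$. For $f$ in the unit ball I would estimate $\Vert\wco(I-R_N)f\Vert_\ast$ via the Littlewood--Paley/Carleson characterization $\Vert g\Vert_\ast^2\asymp\sup_a\int_\BD|g'(z)|^2(1-|\sigma_a(z)|^2)\,dA(z)$, splitting $\bigl(\wco(I-R_N)f\bigr)'$ by the product rule into a term carrying $\psi'\cdot((I-R_N)f)\circ\varphi$ and a term carrying $\psi\cdot\bigl(((I-R_N)f)'\circ\varphi\bigr)\varphi'$. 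The first is controlled, using the pointwise bound $|g(w)|\lesssim\Vert g\Vert L(w)$ for $g\in BMOA$ and the fact that $(I-R_N)f$ is small on compacta, so that only the region $|\varphi|\to1$ survives as $N\to\infty$, producing the $\limsup_{|\varphi(a)|\to1}\beta(a)$ contribution; the second, composition, part is bounded via the $n$-th power decomposition by $\sup_{n\ge N}\Vert\psi\varphi^n\Vert_\ast$, which decreases to $\limsup_n\Vert\psi\varphi^n\Vert_\ast$ as $N\to\infty$.

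The main obstacle is this last step: converting the Carleson-measure estimate for the composition part $\psi\cdot\bigl(((I-R_N)f)'\circ\varphi\bigr)\varphi'$ into control by the tail quantity $\sup_{n\ge N}\Vert\psi\varphi^n\Vert_\ast$. This is precisely the quantitative core of the $n$-th power method, namely relating the $\alpha$-type expression $\sup_a|\psi(a)|\Vert\sigma_{\varphi(a)}\circ\varphi\circ\sigma_a\Vert_2$ to $\Vert\psi\varphi^n\Vert_\ast$, and I expect it to require the frequency decomposition of \cite{GLL} together with uniform control of the error terms in both the Fej\'er truncation and the test-function computation. The remaining delicate points are verifying that the low-frequency contribution is genuinely absorbed by the compact operators $\wco R_N$, and that the cross terms between the $\psi'$ and $\varphi'$ pieces do not obstruct the clean separation into the two $\limsup$ quantities.
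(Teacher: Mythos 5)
Your lower bound follows the paper's route (monomials for the first term, the normalized squared-logarithm functions for the second), but it has one unpatched hole. With $g_k(z)=L(\varphi(a_k))^{-1}\bigl(\log\frac{2}{1-\overline{\varphi(a_k)}z}\bigr)^2$ the ``error'' in $\wco g_k\circ\sigma_{a_k}=L(\varphi(a_k))\,\psi\circ\sigma_{a_k}+(\text{error})$ contains the term $\psi(a_k)\,\transf{g_k\circ\varphi}{a_k}$, whose $L^2$ norm is comparable to $\alpha(a_k)\Vert g_k\Vert_{\ast}$. This is \emph{not} of lower order relative to $\beta(a_k)$: the quantities $\alpha$ and $\beta$ are in general incomparable. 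The paper absorbs it by first proving the separate bound $\Vert\wco\Vert_{e,BMOA}\gtrsim\limsup_{|\varphi(a)|\to1}\alpha(a)$ using the additional test functions $\sigma_{\varphi(a)}-\varphi(a)$ together with Lemma \ref{lemmaupperest}(i), and only then writing $\Vert\wco\Vert_{e,BMOA}\gtrsim\limsup_k(\Vert\wco g_k\Vert_{\ast}+\alpha(a_k))\gtrsim\limsup_k\beta(a_k)$. You need this intermediate step (or a substitute); your claim that ``$f_k\circ\varphi$ is nearly constant near $a_k$'' is exactly what fails.

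The more serious gap is in the upper bound, where you explicitly leave the crux unresolved: converting the composition part into control by $\sup_{n\ge N}\Vert\psi\varphi^n\Vert_{\ast}$. That step is the content of the theorem, and the paper's mechanism for it is quite different from your Fej\'er-mean/Carleson-measure/product-rule outline. The paper uses the dilations $K_nf(z)=f(r_nz)$ and works directly with the Garsia-type seminorm $\sup_a\Vert\transf{f}{a}\Vert_2$, never differentiating $\wco f$. For $|\varphi(a)|\le r$ it splits the integral defining $\Vert\psi\circ\sigma_a\cdot\transf{S_nf\circ\varphi}{a}\Vert_2$ over the level set $E(\varphi,a,t)=\{\xi:|(\sigma_{\varphi(a)}\circ\varphi\circ\sigma_a)(\xi)|>t\}$ and its complement: off $E(\varphi,a,t)$ the values of $S_nf$ involved stay in the compact set $Q(r,t)$, so that piece dies as $n\to\infty$; on $E(\varphi,a,t)$ one applies H\"older and the key Lemma \ref{lemmaupperest1}, namely $\limsup_{t\to1}\sup_{|\varphi(a)|\le r}\int_{E(\varphi,a,t)}|\psi\circ\sigma_a|^4\,\frac{\mathrm d\theta}{2\pi}\lesssim\limsup_{n\to\infty}\Vert\psi\varphi^n\Vert_{\ast,4}^4$, which rests on the elementary pointwise inequality $(t^n-|\varphi(a)|^n)\,|\psi\circ\sigma_a|\le|(\psi\circ\sigma_a)(\varphi^n\circ\sigma_a-\varphi^n(a))|$ on the level set plus John--Nirenberg. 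Finally, the $\alpha$ contribution from the region $|\varphi(a)|>r$ is disposed of via $\limsup_{|\varphi(a)|\to1}\alpha(a)\lesssim\limsup_{n\to\infty}\Vert\psi\varphi^n\Vert_{\ast}$, which comes from Lemma \ref{lemmaupperest}(i) and the geometric-series identity $\wco(\sigma_{\varphi(a)}-\varphi(a))=(|\varphi(a)|^2-1)\sum_{k\ge0}\overline{\varphi(a)}^k\psi\varphi^{k+1}$ of Lemma \ref{lemmaupperest2}. Without these two ingredients (or worked-out replacements for them) your outline does not close.
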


We next formulate two corollaries to these results. The first one is a direct consequence of Theorem \ref{mainthm2}. Here the estimate for pointwise multipliers appears to be new; the estimate for composition operators was earlier given in \cite{GLL}. 

\begin{corollary} Suppose that $M_\psi$ is bounded on BMOA. Then
\begin{align*}
\Vert M_\psi \Vert_{e,BMOA} \asymp  \limsup_{n\to\infty}  \Vert \psi z^n \Vert_{\ast} + \limsup_{|a|\to 1}L(a)\Vert \transf{\psi}{a}\Vert_2
\end{align*}
and 
\begin{align*}
\Vert C_\varphi \Vert_{e,BMOA} \asymp \limsup_{n\to\infty}  \Vert \varphi^n \Vert_{\ast}.
\end{align*}
\end{corollary}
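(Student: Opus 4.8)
The plan is to obtain both estimates by specializing Theorem \ref{mainthm2} to the two extreme choices of symbols. A pointwise multiplier is the weighted composition operator $M_\psi = \wco$ with $\varphi$ the identity self-map $\varphi(z)=z$, while a composition operator is $C_\varphi = \wco$ with the constant weight $\psi\equiv 1$. In each case I would substitute these symbols into the formula
\[
\Vert \wco \Vert_{e,BMOA} \asymp \limsup_{n\to\infty} \Vert \psi\varphi^n \Vert_\ast + \limsup_{|\varphi(a)|\to 1}\beta(a)
\]
and simplify the two terms, checking along the way that the boundedness hypothesis needed to invoke Theorem \ref{mainthm2} is available.

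For the multiplier, taking $\varphi=\mathrm{id}$ gives $\varphi^n(z)=z^n$, so the first term becomes $\limsup_{n\to\infty}\Vert \psi z^n\Vert_\ast$. Since $\varphi(a)=a$ in this case, we have $L(\varphi(a))=L(a)$, the condition $|\varphi(a)|\to 1$ is exactly $|a|\to 1$, and $\beta(a)=L(a)\Vert \transf{\psi}{a}\Vert_2$; hence the second term becomes $\limsup_{|a|\to 1}L(a)\Vert \transf{\psi}{a}\Vert_2$. The assumed boundedness of $M_\psi$ is precisely what permits the application of Theorem \ref{mainthm2}, so the first claimed estimate follows at once.

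For the composition operator, taking $\psi\equiv 1$ gives $\psi\varphi^n=\varphi^n$, so the first term is $\limsup_{n\to\infty}\Vert \varphi^n\Vert_\ast$. The decisive simplification is that $\transf{\psi}{a}=1\circ\sigma_a-1=0$ for every $a\in\BD$, whence $\beta(a)\equiv 0$ and the second limsup vanishes identically. Since every composition operator is bounded on $BMOA$, Theorem \ref{mainthm2} applies with no additional hypothesis, and the second estimate drops out.

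I expect essentially no obstacle beyond verifying that these symbols do induce $M_\psi$ and $C_\varphi$ and that the quantities $\Vert\psi\varphi^n\Vert_\ast$ and $\beta(a)$ specialize as stated; all the genuine content is already contained in Theorem \ref{mainthm2}. The single point worth flagging is that the statement for $C_\varphi$ carries no boundedness assumption, which I would justify by the classical fact that $C_\varphi$ is bounded on $BMOA$ for every analytic self-map $\varphi$ of $\BD$.
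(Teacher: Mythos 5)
Your proposal is correct and is exactly the argument the paper intends: the corollary is stated there as a direct consequence of Theorem \ref{mainthm2}, obtained by specializing to $\varphi=\mathrm{id}$ (so $\beta(a)=L(a)\Vert \transf{\psi}{a}\Vert_2$) and to $\psi\equiv 1$ (so $\transf{\psi}{a}\equiv 0$ and the $\beta$-term vanishes). Your remark that $C_\varphi$ is always bounded on $BMOA$ is the right justification for applying the theorem in the second case.
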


The following result is a variant of formula \eqref{eq_ess_norm_vmoa}, and it is proved in Section \ref{sec_pfofthm2}.

\begin{corollary}\label{corend} If $\wco$ is bounded on $VMOA$, then
\begin{align*}
\Vert \wco \Vert_{e,BMOA}\asymp \Vert \wco \Vert_{e,VMOA} \asymp \limsup_{|\varphi(a)|\to 1} \alpha(a) + \limsup_{|\varphi(a)|\to 1}\beta(a).
\end{align*}
\end{corollary}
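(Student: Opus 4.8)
The plan is to reduce the statement to the two essential-norm formulas already at hand, namely \eqref{eq_ess_norm_vmoa} on $VMOA$ and Theorem \ref{mainthm2} on $BMOA$, and to show that under $VMOA$-boundedness every occurring quantity collapses to the right-hand side $R:=\limsup_{|\varphi(a)|\to 1}\alpha(a)+\limsup_{|\varphi(a)|\to 1}\beta(a)$. The first observation is that if $\wco$ is bounded on $VMOA$, then applying it to the constant $1$ and to the identity $z$ (both in $VMOA$) gives $\psi=\wco 1\in VMOA$ and $\psi\varphi=\wco z\in VMOA$; moreover $\wco$ is then bounded on $BMOA$, so Theorem \ref{mainthm2} applies. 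Because $\varphi$ maps every disc $\{|z|\le r\}$ into a compact subset of $\BD$, the condition $|\varphi(a)|\to 1$ forces $|a|\to 1$, so $\limsup_{|\varphi(a)|\to 1}\le\limsup_{|a|\to 1}$ for any nonnegative quantity, and only the reverse inequalities require argument.

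I would first treat the $\beta$ term. Take $a_k$ with $|a_k|\to 1$ realizing $\limsup_{|a|\to 1}\beta(a)$ and pass to a subsequence along which $|\varphi(a_k)|$ converges: either $|\varphi(a_k)|\to 1$, so the limit is dominated by $\limsup_{|\varphi(a)|\to 1}\beta$, or $|\varphi(a_k)|\le\rho<1$, whence $\beta(a_k)=L(\varphi(a_k))\Vert\transf{\psi}{a_k}\Vert_2\le L(\rho)\Vert\transf{\psi}{a_k}\Vert_2\to 0$ since $\psi\in VMOA$. Thus $\limsup_{|a|\to 1}\beta(a)=\limsup_{|\varphi(a)|\to 1}\beta(a)$.

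The heart of the matter, and the step I expect to be the main obstacle, is the analogous collapse $\limsup_{|a|\to 1}\alpha(a)=\limsup_{|\varphi(a)|\to 1}\alpha(a)$, for which it suffices to show $\alpha(a_k)\to 0$ along any sequence with $|a_k|\to 1$ and $|\varphi(a_k)|\le\rho<1$. In this regime I would first replace the inner self-map by the plain transform of $\varphi$: with $g_a=\sigma_{\varphi(a)}\circ\varphi\circ\sigma_a$ and $h_a=\transf{\varphi}{a}=\varphi\circ\sigma_a-\varphi(a)$, the chain rule gives $g_a'=\bigl(\sigma_{\varphi(a)}'\circ(\varphi\circ\sigma_a)\bigr)\,h_a'$, and the elementary bound $|\sigma_{\varphi(a)}'(w)|=\frac{1-|\varphi(a)|^2}{|1-\overline{\varphi(a)}w|^2}\asymp 1$ for $|w|<1$ and $|\varphi(a)|\le\rho$ yields $|g_a'|\asymp|h_a'|$ pointwise. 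Since $g_a(0)=h_a(0)=0$, the Littlewood--Paley identity (expressing $\Vert f\Vert_2^2$ for $f(0)=0$ as a weighted $L^2$-integral of $f'$) then gives $\Vert g_a\Vert_2\asymp\Vert\transf{\varphi}{a}\Vert_2$, so that $\alpha(a)\asymp|\psi(a)|\,\Vert\transf{\varphi}{a}\Vert_2$ throughout this regime. It remains to prove $|\psi(a_k)|\Vert\transf{\varphi}{a_k}\Vert_2\to 0$, and here I would exploit $\psi\varphi\in VMOA$ via the product identity
\[
\transf{\psi\varphi}{a}=\psi(a)\,\transf{\varphi}{a}+\varphi(a)\,\transf{\psi}{a}+\transf{\psi}{a}\,\transf{\varphi}{a},
\]
which gives $|\psi(a)|\Vert\transf{\varphi}{a}\Vert_2\le\Vert\transf{\psi\varphi}{a}\Vert_2+|\varphi(a)|\Vert\transf{\psi}{a}\Vert_2+\Vert\transf{\varphi}{a}\Vert_\infty\Vert\transf{\psi}{a}\Vert_2$. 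As $|a_k|\to 1$ the first term tends to $0$ because $\psi\varphi\in VMOA$, the second because $\psi\in VMOA$ and $|\varphi|\le 1$, and the third because $\Vert\transf{\varphi}{a}\Vert_\infty\le 2$ while $\Vert\transf{\psi}{a_k}\Vert_2\to 0$. This is precisely where $VMOA$-boundedness is indispensable.

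Combining the two collapses with \eqref{eq_ess_norm_vmoa} gives $\Vert\wco\Vert_{e,VMOA}\asymp R$. To identify $\Vert\wco\Vert_{e,BMOA}$ with $R$ I would invoke Theorem \ref{mainthm2}: its $\beta$ term is already in the form appearing in $R$, so it remains to match $\limsup_{n\to\infty}\Vert\psi\varphi^n\Vert_*$ with $\limsup_{|\varphi(a)|\to 1}\alpha(a)$. The required comparison $\limsup_{n\to\infty}\Vert\psi\varphi^n\Vert_*\asymp\limsup_{|\varphi(a)|\to 1}\alpha(a)$ is the bridge between the ``$n$-th power'' and the ``$\alpha$'' formulations furnished by the estimates underlying Theorem \ref{mainthm2} (for $\psi=1$ it reduces to the composition-operator assertion of the preceding corollary). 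This yields $\Vert\wco\Vert_{e,BMOA}\asymp R\asymp\Vert\wco\Vert_{e,VMOA}$, as desired.
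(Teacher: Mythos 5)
Your reduction of the $VMOA$ essential norm to $R=\limsup_{|\varphi(a)|\to 1}\alpha(a)+\limsup_{|\varphi(a)|\to 1}\beta(a)$ is sound and close to the paper's: the paper also fixes $r$, shows that $\alpha$ and $\beta$ vanish along $|a|\to 1$ with $|\varphi(a)|\le r$, and then invokes \eqref{eq_ess_norm_vmoa}. Where you differ is only in how you obtain the two vanishing facts: the paper cites \cite[Prop.~4.1]{lai2} for $\Vert \transf{\psi}{a}\Vert_2\to 0$ and $|\psi(a)|\Vert\varphi\circ\sigma_a-\varphi(a)\Vert_2\to 0$, and uses the elementary bound $\Vert\sigma_{\varphi(a)}\circ\varphi\circ\sigma_a\Vert_2\le\Vert\varphi\circ\sigma_a-\varphi(a)\Vert_2/(1-|\varphi(a)|)$ instead of your Littlewood--Paley comparison; your derivation via the product identity for $\transf{\psi\varphi}{a}$ is a legitimate self-contained substitute. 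So the conclusion $\Vert\wco\Vert_{e,VMOA}\asymp R$ is fine.

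The genuine gap is in the last step, where you pass to $\Vert\wco\Vert_{e,BMOA}$. You need the upper bound $\Vert\wco\Vert_{e,BMOA}\lesssim R$, and via Theorem \ref{mainthm2} this amounts to $\limsup_{n\to\infty}\Vert\psi\varphi^n\Vert_{\ast}\lesssim \limsup_{|\varphi(a)|\to 1}\alpha(a)+\limsup_{|\varphi(a)|\to 1}\beta(a)$. You assert this as one half of a ``bridge'' $\limsup_{n}\Vert\psi\varphi^n\Vert_{\ast}\asymp\limsup_{|\varphi(a)|\to 1}\alpha(a)$ ``furnished by the estimates underlying Theorem \ref{mainthm2}'', but those estimates (Lemma \ref{lemmaupperest}(i) together with Lemma \ref{lemmaupperest2}) give only the opposite inequality $\limsup_{|\varphi(a)|\to 1}\alpha(a)\lesssim\limsup_{n}\Vert\psi\varphi^n\Vert_{\ast}$. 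The direction you need is exactly the statement that the $VMOA$ formula \eqref{eq_ess_norm_vmoa} controls the $BMOA$ essential norm, which the introduction points out is \emph{false} for general bounded $\wco$ on $BMOA$; it holds only under the $VMOA$-invariance hypothesis, and deducing it from the corollary you are trying to prove would be circular. The paper closes this gap with a functional-analytic ingredient absent from your proposal: $VMOA^{**}=BMOA$, $\wco$ on $BMOA$ is the second adjoint of $\wco$ on $VMOA$, and since $(VMOA)^{*}=H^1$ has the metric approximation property, Theorem 3 of \cite{axler} yields $\Vert\wco\Vert_{e,BMOA}\asymp\Vert\wco\Vert_{e,VMOA}$; the upper bound then follows from \eqref{eq_ess_norm_vmoa} and your collapse arguments. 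Without this (or a direct re-run of the Section \ref{sec_pfofthm2} upper-bound machinery to show $\limsup_n\Vert\psi\varphi^n\Vert_{\ast}\lesssim R$ under the hypothesis), your proof of the inequality $\Vert\wco\Vert_{e,BMOA}\lesssim R$ is incomplete.
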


The rest of the paper is organized as follows. Section \ref{sec_backgd} contains preparatory material. In Sections \ref{sec_pfofthm1} and \ref{sec_pfofthm2} we establish the proofs of Theorems \ref{mainthm1} and \ref{mainthm2}.


\section{Preliminary results}\label{sec_backgd}

In this section we collect several preliminary results needed for the proofs of our main theorems. 
We start with known properties of $BMOA$ functions.
First, we need a pointwise estimate, which follows by integrating the well-known inequality $|f'(z)|\lesssim \Vert f \Vert(1-|z|^2)^{-1}$, which holds for  all $f\in BMOA$; see, for example, \cite{gir} or \cite{zhu}. 

\begin{lemma}\label{lemmaptwiseie}
For $f\in BMOA$ and $z\in \UnitDisk$,
\begin{align*}
|f(z)|\lesssim L(z)\Vert f \Vert,
\end{align*}
where $L(z)=\log\frac{2}{1-|z|^2}$.
\end{lemma}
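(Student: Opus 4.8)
The plan is to integrate the pointwise derivative bound $|f'(w)| \lesssim \|f\|(1-|w|^2)^{-1}$ along the straight line segment from the origin to $z$. Parametrising this segment as $w = tz$ with $t \in [0,1]$, I would write $f(z) - f(0) = \int_0^1 f'(tz)\, z\, dt$ and then estimate
\[
|f(z) - f(0)| \le |z| \int_0^1 |f'(tz)|\, dt \lesssim \|f\| \int_0^1 \frac{|z|}{1 - t^2|z|^2}\, dt .
\]

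Next I would evaluate the remaining integral explicitly. Writing $r = |z|$ and substituting $u = tr$ gives $\int_0^1 \frac{r\,dt}{1 - t^2 r^2} = \int_0^r \frac{du}{1-u^2} = \tfrac12 \log\frac{1+r}{1-r}$, so the task reduces to comparing this quantity with $L(z)$. Since $(1+r)^2 \le 4$, one has $\frac{1+r}{1-r} = \frac{(1+r)^2}{1-r^2} \le \frac{4}{1-r^2}$, whence $\tfrac12\log\frac{1+r}{1-r} \le \tfrac12\log\frac{4}{1-|z|^2} = \tfrac12\bigl(\log 2 + L(z)\bigr)$. Because $L(z) = \log\frac{2}{1-|z|^2} \ge \log 2$, the right-hand side is at most $L(z)$, and therefore $|f(z) - f(0)| \lesssim L(z)\,\|f\|$.

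Finally, I would absorb the constant term $f(0)$. By the definition of the norm, $|f(0)| \le \|f\|$, and since $L(z) \ge \log 2 > 0$ we also have $\|f\| \lesssim L(z)\,\|f\|$; combining these via the triangle inequality yields $|f(z)| \le |f(0)| + |f(z)-f(0)| \lesssim L(z)\,\|f\|$, as desired. The computation is entirely routine, and the only point demanding mild care is the constant bookkeeping needed to pass from $\tfrac12\log\frac{1+r}{1-r}$ to a clean multiple of $L(z)$, which is precisely what motivates the normalisation $\log\frac{2}{1-|z|^2}$ (rather than $\log\frac{1}{1-|z|^2}$) in the definition of $L$.
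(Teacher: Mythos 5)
Your proof is correct and follows exactly the route the paper indicates: the authors state that the lemma "follows by integrating the well-known inequality $|f'(z)|\lesssim \Vert f \Vert(1-|z|^2)^{-1}$," and your radial integration, the explicit evaluation $\tfrac12\log\tfrac{1+r}{1-r}$, and the bookkeeping against $L(z)\ge\log 2$ fill in precisely that sketch. Nothing to correct.
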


For $1 \le p < \infty$, we define the $BMOA$ $p$-seminorm by 
$\Vert f \Vert_{\ast,p}=\sup_a \Vert \transf{f}{a} \Vert_p$.  The following reverse H\"older inequality is a consequence of the John-Nirenberg lemma; see \cite{gar}, \cite{gir} or \cite{zhu}.
 
\begin{lemma}\label{lemmajn}
For every $1 \le p < \infty$,
\begin{align*}
\Vert f\Vert_{\ast} \asymp \Vert f \Vert_{\ast,p}
\end{align*}
\end{lemma}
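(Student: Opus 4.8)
The plan is to isolate the content of the lemma in a single reverse H\"older estimate and to obtain it from the exponential integrability furnished by the John--Nirenberg inequality. Since $\mathrm d\theta/2\pi$ is a probability measure on $\UnitCircle$, the monotonicity of $L^p$-norms gives $\Vert \transf{f}{a}\Vert_q\le \Vert \transf{f}{a}\Vert_p$ for $1\le q\le p<\infty$, and passing to the supremum over $a\in\UnitDisk$ yields $\Vert f\Vert_{\ast,q}\le \Vert f\Vert_{\ast,p}$. In particular $\Vert f\Vert_{\ast,1}\le \Vert f\Vert_{\ast,p}$ for every $p$. Because $\Vert f\Vert_\ast=\Vert f\Vert_{\ast,2}$, the lemma will follow once we prove the reverse inequality
\begin{align*}
\Vert f\Vert_{\ast,p}\le C(p)\,\Vert f\Vert_{\ast,1},\qquad 1\le p<\infty,
\end{align*}
with $C(p)$ depending only on $p$; indeed, combining the two bounds gives $\Vert f\Vert_{\ast,p}\asymp \Vert f\Vert_{\ast,1}$ for all $p$, and specializing to $p=2$ identifies this common quantity with $\Vert f\Vert_\ast$.

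The crucial point is that the function $\transf{f}{a}=f\circ\sig_a-f(a)$ is a normalized copy of $f$: it has circle-mean zero, since $\transf{f}{a}(0)=f(\sig_a(0))-f(a)=0$, and the seminorm is conformally invariant, so that $\Vert \transf{f}{a}\Vert_{\ast,1}=\Vert f\circ\sig_a\Vert_{\ast,1}=\Vert f\Vert_{\ast,1}$. Using the standard comparison of the M\"obius (Poisson-weighted) mean oscillation with the arc-based mean oscillation (see \cite{gar}, \cite{gir}, \cite{zhu}) together with the John--Nirenberg inequality applied to $\transf{f}{a}$ on the whole circle, one obtains absolute constants $C_1,C_2>0$ such that, for all $a\in\UnitDisk$ and $\la>0$,
\begin{align*}
\frac{1}{2\pi}\bigl|\{\tht\in[0,2\pi): |\transf{f}{a}(\e^{i\tht})|>\la\}\bigr|\le C_1\exp\Bigl(-\frac{C_2\,\la}{\Vert f\Vert_{\ast,1}}\Bigr).
\end{align*}
The decisive feature is that $C_1,C_2$ do not depend on $a$, which is exactly what conformal invariance buys us.

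With the uniform tail bound in hand, the reverse H\"older estimate is a routine integration via the distribution-function (layer-cake) formula. Denoting the normalized measure above by $\mu_a(\la)$, we have
\begin{align*}
\Vert \transf{f}{a}\Vert_p^p=p\int_0^\infty \la^{p-1}\mu_a(\la)\,\mathrm d\la\le pC_1\int_0^\infty \la^{p-1}\exp\Bigl(-\frac{C_2\la}{\Vert f\Vert_{\ast,1}}\Bigr)\mathrm d\la=\frac{C_1\,\Gamma(p+1)}{C_2^{\,p}}\,\Vert f\Vert_{\ast,1}^{\,p}.
\end{align*}
Taking $p$-th roots and then the supremum over $a$ gives $\Vert f\Vert_{\ast,p}\le (C_1\Gamma(p+1))^{1/p}C_2^{-1}\Vert f\Vert_{\ast,1}$, which is the required bound; one could equally expand the exponential integrability estimate $\sup_a\int \e^{\eps|\transf{f}{a}|}\,\mathrm d\tht/2\pi\lesssim 1$ term by term.

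I expect the one genuine obstacle to be the second step: securing the exponential tail estimate with constants independent of $a$. The classical John--Nirenberg theorem is phrased for the arc-based mean oscillation $\sup_I|I|^{-1}\int_I|g-g_I|\,\mathrm d\tht$, so one must first pass from the M\"obius seminorm to this arc oscillation (an elementary Poisson-kernel comparison, not itself requiring John--Nirenberg) before invoking the exponential distribution bound on the full circle. Once this comparison and the conformal invariance $\Vert \transf{f}{a}\Vert_{\ast,1}=\Vert f\Vert_{\ast,1}$ are in place, the integration in the third step is entirely mechanical, and the equivalence $\Vert f\Vert_\ast\asymp\Vert f\Vert_{\ast,p}$ follows.
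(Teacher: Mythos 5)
Your argument is correct and is precisely the route the paper indicates: the paper gives no proof of this lemma, stating only that it ``is a consequence of the John--Nirenberg lemma'' and citing \cite{gar}, \cite{gir}, \cite{zhu}, and your derivation (monotonicity of $L^p$-norms on a probability space for one direction; conformal invariance of the seminorm plus the uniform-in-$a$ John--Nirenberg exponential tail bound and the layer-cake formula for the reverse H\"older direction) is the standard way to fill in that citation. You correctly identify the only delicate point, namely passing from the M\"obius-invariant oscillation to the arc-based one before invoking John--Nirenberg so that the constants are independent of $a$.
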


The next lemma collects two inequalities which were proved in \cite[Prop.~2.3 and (3.19)]{lai2}. The first one of them is a version of the Littlewood inequality (\cite{shapiro1}, \cite{comac}). It implies, in particular, the well-known result that all composition operators are bounded on $BMOA$; see, for example, \cite{GLL}.

\begin{lemma}[{\cite{lai2}}]\label{lemmaweightedlwie}
Let $g\in H^2$ be such that $g(0)=0$. Then for $\varphi$ such that  $\varphi(0)=0$, we have
\begin{align*}
\Vert g \circ \varphi \Vert_2 \lesssim \Vert \varphi \Vert_2 \Vert g \Vert_2
\end{align*}
and for $\frac{1}{2}\le t < 1$ and $|z|\le t$, we have
\begin{align*}
|g(z)| \le 2|z|\max_{|w|\le t}|g(w)|.
\end{align*}
\end{lemma}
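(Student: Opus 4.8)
The second inequality is the simpler of the two, and I would dispose of it first. Since $g(0)=0$, the function $h(w)=g(w)/w$ extends analytically to $\BD$, and $|g(z)|=|z|\,|h(z)|$. For $|z|\le t$ the maximum principle gives $|h(z)|\le\max_{|w|=t}|h(w)|=t^{-1}\max_{|w|=t}|g(w)|\le t^{-1}\max_{|w|\le t}|g(w)|$, so that $|g(z)|\le (|z|/t)\max_{|w|\le t}|g(w)|$; the hypothesis $t\ge 1/2$ then yields the asserted bound $|g(z)|\le 2|z|\max_{|w|\le t}|g(w)|$. The same factorization $g=wh$, with $\Vert h\Vert_2=\Vert g\Vert_2$ by the coefficient shift, is the starting point for the first inequality: writing $g\circ\varphi=\varphi\cdot(h\circ\varphi)=M_\varphi C_\varphi h$, the estimate $\Vert g\circ\varphi\Vert_2\lesssim\Vert\varphi\Vert_2\Vert g\Vert_2$ is equivalent to the operator bound $\Vert M_\varphi C_\varphi\Vert_{L(H^2)}\lesssim\Vert\varphi\Vert_2$ for the weighted composition operator whose weight and symbol both equal $\varphi$.

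For this operator bound my plan is to pass to the Littlewood--Paley description of the $H^2$ norm. Since $g(0)=\varphi(0)=0$, the boundary terms drop out and
\begin{align*}
\Vert g\circ\varphi\Vert_2^2\asymp\int_{\BD}|g'(\varphi(z))|^2|\varphi'(z)|^2\log\tfrac{1}{|z|}\,dA(z)=\int_{\BD}|g'(w)|^2 N_\varphi(w)\,dA(w),
\end{align*}
where the second equality is the non-univalent change of variables and $N_\varphi(w)=\sum_{\varphi(z)=w}\log\frac{1}{|z|}$ is the Nevanlinna counting function. Two facts about $N_\varphi$ are available here: Littlewood's inequality $N_\varphi(w)\le\log\frac{1}{|w|}$, valid because $\varphi(0)=0$, and, on taking $g(w)=w$ in the same identity, the normalization $\int_{\BD}N_\varphi(w)\,dA(w)\asymp\Vert\varphi\Vert_2^2$. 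Since also $\Vert g\Vert_2^2\asymp\int_{\BD}|g'(w)|^2\log\frac{1}{|w|}\,dA(w)$, the desired inequality reduces to the weighted correlation estimate
\begin{align*}
\int_{\BD}|g'(w)|^2 N_\varphi(w)\,dA(w)\lesssim\Big(\int_{\BD}N_\varphi\,dA\Big)\Big(\int_{\BD}|g'(w)|^2\log\tfrac{1}{|w|}\,dA(w)\Big).
\end{align*}

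The main obstacle is precisely this last step, namely extracting the genuine factor $\Vert\varphi\Vert_2$ rather than merely recovering Littlewood's bound $\Vert g\circ\varphi\Vert_2\le\Vert g\Vert_2$. Replacing $N_\varphi(w)$ by its pointwise majorant $\log\frac{1}{|w|}$, or using only $|\varphi|\le 1$ in the original integral, discards exactly the information that makes $\Vert\varphi\Vert_2$ small, so the estimate cannot follow from $\Vert\varphi\Vert_\infty\le 1$ alone: a symbol whose modulus is concentrated near the boundary on a short arc would defeat any such crude bound were it not for the constraints of analyticity. The resolution must therefore use that $\varphi$ is analytic beyond the inequality $|\varphi|\le 1$ --- for instance the sub-mean-value property of the counting function, which lets one dominate $N_\varphi(w)$ by its local averages and thereby play the pointwise bound off against the $L^1$ normalization $\int_{\BD}N_\varphi\,dA\asymp\Vert\varphi\Vert_2^2$; equivalently, one can recast the claim as the Carleson-type estimate $\mu(S(I))\lesssim\Vert\varphi\Vert_2^2\,|I|$ for the pullback $\mu$ of $|\varphi|^2\,dm$ under the boundary values of $\varphi$ and verify it box by box. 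I expect the counting-function route to be the most economical, with the sub-mean-value estimate carrying the essential weight of the argument.
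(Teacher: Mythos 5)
Your treatment of the second inequality is complete and correct: factoring $g=zh$ with $h$ analytic, applying the maximum principle to $h$ on $|w|\le t$, and using $1/t\le 2$ is exactly the intended elementary argument. The issue is the first inequality, where your proposal stops short of a proof. (Note that the paper itself does not prove this lemma; both statements are imported from \cite{lai2}, Prop.~2.3 and (3.19) there, so the only question is whether your argument stands on its own.) It does not: after the correct reduction, via the Littlewood--Paley identity and the non-univalent change of variables, to the estimate $\int_{\BD}|g'|^2N_\varphi\,\mathrm dA\lesssim\big(\int_{\BD}N_\varphi\,\mathrm dA\big)\big(\int_{\BD}|g'|^2\log\tfrac{1}{|w|}\,\mathrm dA\big)$, you assert that the sub-mean-value property of $N_\varphi$ ``must'' carry the argument, but you never carry it out --- and this is precisely where all the work lies.

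This gap is not a routine verification. The sub-mean-value property on the disc $\Delta(w,|w|/2)$ gives $N_\varphi(w)\lesssim\Vert\varphi\Vert_2^2/|w|^2$, and combining this with Littlewood's bound $N_\varphi(w)\le\log\tfrac{1}{|w|}$ via $\min\{A,B\}\le\sqrt{AB}$ yields only $N_\varphi(w)\lesssim\Vert\varphi\Vert_2\sqrt{\log(1/|w|)}$ for $|w|\ge 1/2$, i.e.\ one power of $\Vert\varphi\Vert_2$ and half a logarithm --- not enough. Worse, the pointwise bound $N_\varphi(w)\lesssim\Vert\varphi\Vert_2^2\log\tfrac{1}{|w|}$ that would make your ``weighted correlation estimate'' immediate is false (take $\varphi(z)=\epsilon z$ and evaluate at $|w|=\epsilon^2$), and the obvious splittings of the integral --- at a fixed radius, or at the level where the two bounds on $N_\varphi$ cross --- each lose either a logarithmic factor or the entire factor $\Vert\varphi\Vert_2^2$, recovering only Littlewood's $\Vert g\circ\varphi\Vert_2\lesssim\Vert g\Vert_2$. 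So any correct proof must couple $N_\varphi$ with $|g'|^2$ in an essentially non-pointwise way; until that coupling is exhibited, the first inequality remains unproved. You should either supply this argument in full or do what the paper does and quote it from \cite{lai2}.
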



The next result contains four basic estimates about weighted composition operators which will be applied a number of times in the sequel. 
These estimates were essentially proved in \cite{lai2} (Proof of Theorem 2.1 and Lemma 3.4), but were not collected there as independent results.
We also formulate these estimates slightly differently from \cite{lai2} and hence present short proofs.

\begin{lemma}[\cite{lai2}]\label{lemmaupperest}

(i) Let $f_a = \sigma_{\varphi(a)}-\varphi(a)$. For all $a\in\UnitDisk$,
\begin{align*}
\alpha(a) \lesssim \beta(a)/L(\varphi(a)) + \Vert \wco f_a \Vert_{\ast}.
\end{align*}

(ii) Let $g_a = h_a^2/h_a(\varphi(a))$, where $h(z)=\log(2/(1-\overline{\varphi(a)}z)$. For all $a\in\UnitDisk$,
\begin{align*}
\beta(a) \lesssim \Vert \transf{\psi}{a} \transf{g_a\circ\varphi}{a}\Vert_2 + \Vert\wco g_a \Vert_{\ast} + \alpha(a).
\end{align*}

(iii) For all $f\in BMOA$ and $a\in\UnitDisk$,
\begin{align*}
\Vert \transf{\wco f}{a} \Vert_2  \lesssim \Vert \transf{\psi}{a} \transf{f\circ\varphi}{a}\Vert_2+ \left(\alpha(a) + \beta(a) \right)\Vert f \Vert_{\ast}.
\end{align*}

(iv) For all $f\in BMOA$ and $a\in\UnitDisk$,
\begin{align*}
\Vert \transf{\psi}{a} \transf{f\circ\varphi}{a}\Vert_2 \lesssim \Vert f \Vert_{\ast} \min\left\{\sup_{w\in\BD}\beta(w), \frac{\Vert \wco \Vert_{L(BMOA)}}{\sqrt{L(\varphi(a))}}\right\}.
\end{align*}
\end{lemma}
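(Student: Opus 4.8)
The plan is to prove the two bounds contained in the minimum separately, after a common reduction. Put $\Phi_a=\sigma_{\varphi(a)}\circ\varphi\circ\sigma_a$ and $g=\transf{f}{\varphi(a)}$. Since $\sigma_{\varphi(a)}$ is an involution, a direct computation gives $\transf{f\circ\varphi}{a}=g\circ\Phi_a$, where $g(0)=0$, $\Vert g\Vert=\Vert g\Vert_\ast=\Vert f\Vert_\ast$ by the M\"obius invariance of the seminorm, and $\Phi_a(0)=0$, so $|\Phi_a(z)|\le|z|$ by the Schwarz lemma. Note also $\transf{\psi}{a}(0)=0$. Thus the quantity to estimate is $\Vert \transf{\psi}{a}\,(g\circ\Phi_a)\Vert_2$, and both $g=\transf{f}{\varphi(a)}$ and $\transf{\psi}{a}$ are single instances of the respective $BMOA$ transforms, so $\Vert g\Vert_4\le\Vert f\Vert_{\ast,4}$ and $\Vert\transf{\psi}{a}\Vert_4\le\Vert\psi\Vert_{\ast,4}$.

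For the first term of the minimum, by the Cauchy--Schwarz inequality $\Vert \transf{\psi}{a}\,(g\circ\Phi_a)\Vert_2\le\Vert\transf{\psi}{a}\Vert_4\,\Vert g\circ\Phi_a\Vert_4$. Littlewood's subordination principle (valid since $\Phi_a(0)=0$) gives $\Vert g\circ\Phi_a\Vert_4\le\Vert g\Vert_4\le\Vert f\Vert_{\ast,4}$, while $\Vert\transf{\psi}{a}\Vert_4\le\Vert\psi\Vert_{\ast,4}$. By the reverse H\"older inequality of Lemma \ref{lemmajn}, $\Vert f\Vert_{\ast,4}\asymp\Vert f\Vert_\ast$ and $\Vert\psi\Vert_{\ast,4}\asymp\Vert\psi\Vert_\ast$. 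Finally, since $L\ge\log 2$ we have $\Vert\psi\Vert_\ast=\sup_w\Vert\transf{\psi}{w}\Vert_2\le(\log 2)^{-1}\sup_w\beta(w)$. Multiplying these out yields $\Vert \transf{\psi}{a}\,(g\circ\Phi_a)\Vert_2\lesssim\Vert f\Vert_\ast\,\sup_w\beta(w)$, which is the first bound.

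The second term is the delicate one, and the same H\"older splitting does \emph{not} suffice: combining it with $\beta(a)\le\sup_w\beta(w)\lesssim\Vert\wco\Vert$ (from \eqref{eq_norm_bmoa}), hence $\Vert\transf{\psi}{a}\Vert_2=\beta(a)/L(\varphi(a))\lesssim\Vert\wco\Vert/L(\varphi(a))$, and interpolating this small-oscillation $L^2$ bound against the John--Nirenberg $L^p$ bounds only reaches $1/\sqrt{L(\varphi(a))}$ up to a spurious factor $\sqrt{\log L(\varphi(a))}$, an intrinsic loss of the crude argument. To obtain the sharp factor I would instead estimate the product directly through the operator, using the oscillation identity $\transf{\wco F}{a}=\transf{\psi}{a}\transf{F\circ\varphi}{a}+\psi(a)\transf{F\circ\varphi}{a}+F(\varphi(a))\transf{\psi}{a}$ together with $\Vert\wco F\Vert_\ast\le\Vert\wco\Vert\,\Vert F\Vert$. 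The idea is to test $\wco$ against a function $F$ built from $f$ and a logarithmic weight $h_a(z)=\log\frac{2}{1-\overline{\varphi(a)}z}$ peaked at $\varphi(a)$, where $h_a(\varphi(a))=L(\varphi(a))$, normalized so that $\transf{\psi}{a}\transf{F\circ\varphi}{a}$ is the dominant summand of $\transf{\wco F}{a}$ and reproduces a multiple of order $L(\varphi(a))$ of $\transf{\psi}{a}(g\circ\Phi_a)$, while $\Vert F\Vert$ stays of order $\sqrt{L(\varphi(a))}\,\Vert f\Vert_\ast$; the lower bound $\Vert\wco\Vert\gtrsim\Vert\transf{\wco F}{a}\Vert_2/\Vert F\Vert$ then produces exactly the gain $1/\sqrt{L(\varphi(a))}$, once the two remaining summands are shown to be lower order by Lemma \ref{lemmaupperest}(i)--(iii) and the weighted Littlewood inequality (Lemma \ref{lemmaweightedlwie}).

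The main obstacle is precisely this last construction. Multiplication by $h_a$ is not a bounded operation on $BMOA$, so the test function cannot be taken to be $f\,h_a$ outright; it must be built and normalized with care (by truncating or regularizing the logarithmic weight) so as to \emph{simultaneously} keep $\Vert F\Vert\lesssim\sqrt{L(\varphi(a))}\,\Vert f\Vert_\ast$, reproduce $\transf{\psi}{a}(g\circ\Phi_a)$ with the correct amplification inside $\transf{\wco F}{a}$, and make the error summands genuinely negligible. Verifying these three requirements of the regularized weight is the step I expect to cost the most, since it is exactly what converts the $\sqrt{\log L}$-lossy interpolation into the clean estimate of the statement.
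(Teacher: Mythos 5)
There is a genuine gap, on two levels. First, your proposal only addresses part (iv): parts (i)--(iii) are never proved, and you even invoke ``Lemma \ref{lemmaupperest}(i)--(iii)'' as known inputs to your sketch, so the statement as a whole is not established. Second, and more seriously, your treatment of the harder half of (iv) rests on a miscalculation. You dismiss the H\"older/John--Nirenberg route as ``intrinsically'' losing a factor $\sqrt{\log L(\varphi(a))}$, but that loss is an artifact of letting the H\"older exponent on the $\psi$-factor degenerate; with a \emph{fixed} asymmetric splitting there is no loss at all, and this is exactly how the paper proves (iv). Concretely, square the quantity and apply H\"older with exponents $(2,4,4)$ to the three factors $\transf{\psi}{a}$, $\transf{\psi}{a}$, $\transf{f\circ\varphi}{a}^2$:
\begin{align*}
\Vert \transf{\psi}{a}\,\transf{f\circ\varphi}{a}\Vert_2^2
=\Vert \transf{\psi}{a}^2\,\transf{f\circ\varphi}{a}^2\Vert_1
\le \Vert \transf{\psi}{a}\Vert_2\,\Vert \transf{\psi}{a}\Vert_4\,\Vert \transf{f\circ\varphi}{a}\Vert_8^2 .
\end{align*}
Only \emph{one} copy of $\transf{\psi}{a}$ is measured in $L^2$, contributing $\beta(a)/L(\varphi(a))\lesssim \Vert\wco\Vert_{L(BMOA)}/L(\varphi(a))$; the other is measured in $L^4$, where Lemma \ref{lemmajn} bounds it by $\Vert\psi\Vert_{\ast,4}\asymp\Vert\psi\Vert_\ast\lesssim\sup_w\beta(w)\lesssim\Vert\wco\Vert_{L(BMOA)}$ with no dependence on $L(\varphi(a))$; and $\Vert\transf{f\circ\varphi}{a}\Vert_8\le\Vert f\circ\varphi\Vert_{\ast,8}\asymp\Vert f\circ\varphi\Vert_\ast\lesssim\Vert f\Vert_\ast$. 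Taking square roots gives precisely the clean factor $1/\sqrt{L(\varphi(a))}$, and the same display (using $L\ge\log 2$) also yields your first bound, so the two halves of the minimum need not be argued separately.

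Because you rejected this route, you replaced it by a test-function scheme that you yourself leave unconstructed, and that construction is the entire content of the estimate: you need an $F$ with $\Vert F\Vert\lesssim\sqrt{L(\varphi(a))}\,\Vert f\Vert_\ast$ whose oscillation $\transf{F\circ\varphi}{a}$ reproduces $L(\varphi(a))\cdot\transf{f\circ\varphi}{a}$ up to controllable errors. Since the two requirements pull in opposite directions (reproducing the oscillation with amplification $L$ on the relevant set while paying only $\sqrt{L}$ in norm), and since, as you note, multiplication by the logarithmic weight is not bounded on $BMOA$, ``truncating or regularizing'' is not a proof outline but the open problem itself. As written, the second bound in (iv) is not proved.
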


\begin{proof} (i) Since $\Vert f_a \circ\varphi\circ\sigma_a \Vert_\infty \le 2$, we have
\begin{align*}
\alpha(a)&=|\psi(a)| \Vert \transf{f_a\circ\varphi}{a} \Vert_2 \\
&= \Vert \transf{\psi}{a}\cdot f_a \circ\varphi\circ\sigma_a - \transf{\wco f_a}{a}\Vert_2\\
&\le 2\Vert \transf{\psi}{a} \Vert_2 + \Vert \wco f_a\Vert_{\ast}\\
&=2 \beta(a)/L(\varphi(a)) + \Vert \wco f_a\Vert_{\ast}.
\end{align*}

(ii)  By the triangle inequality and because $g_a(\varphi(a)))=L(\varphi(a))$, we get
\begin{align*}
\beta(a) &= \Vert g_a (\varphi(a))\transf{\psi}{a} \Vert_2 \\
&= \Vert \transf{g_a\circ\varphi}{a}  \transf{\psi}{a} + \psi(a)\transf{g_a\circ\varphi}{a} - \transf{\wco g_a}{a} \Vert_2 \\
&\le \Vert \transf{g_a\circ\varphi}{a}  \transf{\psi}{a} \Vert_2 + |\psi(a)|\Vert \transf{g_a\circ\varphi}{a} \Vert_2 + \Vert \wco g_a \Vert_{\ast}.
\end{align*}
Note that $\transf{g_a\circ\varphi}{a} = g_a\circ\sigma_{\varphi(a)}\circ(\sigma_{\varphi(a)}\circ\varphi\circ\sigma_a)-g_a(\varphi(a))$, where 
$(g_a\circ\sigma_{\varphi(a)}-g_a(\varphi(a)))(0) = (\sigma_{\varphi(a)}\circ\varphi\circ\sigma_a)(0) = 0$. Hence
by Lemma \ref{lemmaweightedlwie}, we have
\begin{align*}
|\psi(a)|\Vert \transf{g_a\circ\varphi}{a} \Vert_2 \lesssim \alpha(a) \Vert g_a \Vert_{\ast}.
\end{align*}
Because $\sup_{a\in\BD}\Vert g_a \Vert_{\ast} < \infty$, this yields (ii).

(iii) Again, by the triangle inequality,
\begin{align*}
\Vert \transf{\wco f}{a} \Vert_2 
&= \Vert 
\transf{\psi}{a}\transf{f\circ\varphi}{a} + \psi(a)\transf{f\circ\varphi}{a} +  \transf{\psi}{a}f(\varphi(a)) \Vert_2 \\
&\le 
 \Vert \transf{\psi}{a}\transf{f\circ\varphi}{a} \Vert_2 +
|\psi(a)|\Vert \transf{f\circ\varphi}{a} \Vert_2 + |f(\varphi(a))|\Vert \transf{\psi}{a}\Vert_2,
\end{align*}
so, by Lemmas \ref{lemmaweightedlwie} and \ref{lemmaptwiseie},
\begin{align*}
|\psi(a)|\Vert \transf{f\circ\varphi}{a} \Vert_2 + |f(\varphi(a))|\Vert \transf{\psi}{a}\Vert_2 
&\lesssim
\alpha(a)\Vert f \Vert_{\ast}+ L(\varphi(a))\Vert \transf{\psi}{a}\Vert_2\Vert f \Vert_{\ast}.
\end{align*}

(iv) By applying twice both H\"older's inequality and Lemma \ref{lemmajn}, we get
\begin{align*}
\Vert \transf{\psi}{a} \transf{f\circ\varphi}{a}\Vert_2^2 
&= \Vert \transf{\psi}{a}^2 \transf{f\circ\varphi}{a}^2\Vert_1 \\
&\le \Vert \transf{\psi}{a} \Vert_2 \Vert \transf{\psi}{a} \Vert_4 \Vert \transf{f\circ\varphi}{a}\Vert_8^2\\
&\le \Vert \transf{\psi}{a} \Vert_2 \Vert \psi \Vert_{\ast,4} \Vert f\circ\varphi \Vert_{\ast,8}^2\\
&\lesssim \beta(a) \Vert \psi \Vert_{\ast} \Vert f\circ\varphi \Vert_{\ast}^2/L(\varphi(a)).
\end{align*}
By applying the estimate $\log 2 \le L(\varphi(a))$ and the norm estimate \eqref{eq_norm_bmoa}, we have $\Vert \psi \Vert_{\ast} \le \sup_{w\in\BD}\beta(w)\lesssim \Vert \wco \Vert_{L(BMOA)}$. Hence
\begin{align*}
\beta(a) \Vert \psi \Vert_{\ast} \Vert f\circ\varphi \Vert_{\ast}^2/L(\varphi(a)) &\lesssim
(\sup_{w\in\BD}\beta(w))^2\Vert f \Vert_{\ast}^2/ L(\varphi(a))\\
&\lesssim \Vert f \Vert_{\ast}^2 \min\left\{\sup_{w\in\BD}\beta(w), \frac{\Vert \wco \Vert_{L(BMOA)}}{\sqrt{L(\varphi(a))}}\right\}^2.
\end{align*}
\end{proof}

Finally, we present a weighted version of \cite[Lemma 2.3]{GLL}. The idea of the proof is essentially contained also in \cite{wulan} or \cite[pp.~187-188]{col}. 

\begin{lemma}\label{lemmaupperest2} Suppose that $\wco$ is bounded on BMOA and let $f_a = \sigma_{\varphi(a)}-\varphi(a)$. Then
\begin{align*}
\sup_{a\in\UnitDisk}\Vert \wco  f_a \Vert_{\ast} \le 2\sup_{n\ge 0} \Vert \psi \varphi^n \Vert_{\ast}
\end{align*}
and
\begin{align*}
\limsup_{|\varphi(a)|\to 1}\Vert \wco  f_a \Vert_{\ast} \le 2\limsup_{n\to\infty} \Vert \psi \varphi^n \Vert_{\ast}.
\end{align*}
\end{lemma}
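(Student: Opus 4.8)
The plan is to expand $f_a\circ\varphi$ as a power series in the iterates $\varphi^n$ and then transfer a geometric-series bound to the $BMOA$ seminorm. Writing $b=\varphi(a)$ and using $\sigma_b(z)=(b-z)/(1-\ovl b z)$, a direct computation gives
\begin{align*}
f_a(z)=\sigma_b(z)-b=-(1-|b|^2)\frac{z}{1-\ovl b z},
\end{align*}
so that, composing with $\varphi$ and expanding the geometric series (legitimate since $|\ovl b\,\varphi(z)|\le |b|<1$ for all $z\in\BD$),
\begin{align*}
f_a\circ\varphi = -(1-|b|^2)\sum_{n=1}^\infty \ovl{b}^{\,n-1}\varphi^n.
\end{align*}
Multiplying by $\psi$ yields $\wco f_a=-(1-|b|^2)\sum_{n=1}^\infty \ovl{b}^{\,n-1}\psi\varphi^n$, where the partial sums converge to $\wco f_a$ uniformly on compact subsets of $\BD$, because $\psi$ is bounded on compacta and the geometric series converges uniformly on $\BD$.

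The next step is to pass from the partial sums to the full series at the level of the seminorm. For fixed $a$ and $r<1$ the map $f\mapsto \int_0^{2\pi}|f(\sigma_a(r\e^{i\theta}))-f(a)|^2\,\mathrm d\theta/2\pi$ is continuous under local uniform convergence, since $\sigma_a$ carries $\{|w|\le r\}$ into a compact subset of $\BD$; taking the supremum over $r$ shows that $f\mapsto\Vert\transf{f}{a}\Vert_2$ is lower semicontinuous for local uniform convergence, and taking the supremum over $a$ shows that $\Vert\cdot\Vert_\ast$ is as well. Applying the triangle inequality to the finite partial sums and then letting the number of terms tend to infinity therefore yields the key inequality
\begin{align*}
\Vert \wco f_a\Vert_\ast \le (1-|b|^2)\sum_{n=1}^\infty |b|^{n-1}\Vert \psi\varphi^n\Vert_\ast .
\end{align*}

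The two claimed bounds now follow by estimating the right-hand side. For the first, I would replace each $\Vert\psi\varphi^n\Vert_\ast$ by $\sup_{n\ge 0}\Vert\psi\varphi^n\Vert_\ast$ and sum the geometric series, using $(1-|b|^2)/(1-|b|)=1+|b|\le 2$, which gives $\Vert\wco f_a\Vert_\ast\le 2\sup_{n\ge 0}\Vert\psi\varphi^n\Vert_\ast$ uniformly in $a$. For the second, fix $\eps>0$ and choose $N$ with $\Vert\psi\varphi^n\Vert_\ast\le \limsup_{n\to\infty}\Vert\psi\varphi^n\Vert_\ast+\eps$ for all $n\ge N$. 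Splitting the series at $N$, the tail is bounded by $2(\limsup_{n\to\infty}\Vert\psi\varphi^n\Vert_\ast+\eps)$ exactly as above, while the finite head $(1-|b|^2)\sum_{n=1}^{N-1}|b|^{n-1}\Vert\psi\varphi^n\Vert_\ast$ tends to $0$ as $|b|\to 1$, because $1-|b|^2\to 0$ while $\sup_n\Vert\psi\varphi^n\Vert_\ast<\infty$ by the boundedness of $\wco$. Letting $|b|=|\varphi(a)|\to 1$ and then $\eps\to 0$ finishes the argument.

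The step I expect to be the main obstacle is the passage from the partial sums to the infinite series inside $\Vert\cdot\Vert_\ast$: one must ensure that $\Vert\wco f_a\Vert_\ast$ is genuinely controlled by the \emph{sum} of the seminorms $\Vert\psi\varphi^n\Vert_\ast$, and not merely by a pointwise limit. The lower semicontinuity of $\Vert\cdot\Vert_\ast$ under local uniform convergence is what makes this rigorous; once the key inequality is in hand, the remainder is an elementary geometric-series estimate together with the standard head/tail splitting for the $\limsup$.
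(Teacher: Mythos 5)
Your proof is correct and follows essentially the same route as the paper's: the same expansion $\wco f_a = -(1-|\varphi(a)|^2)\sum_{n\ge 1}\overline{\varphi(a)}^{\,n-1}\psi\varphi^{n}$, the same termwise triangle inequality giving $\Vert \wco f_a\Vert_{\ast}\le (1-|\varphi(a)|^2)\sum_{n\ge 1}|\varphi(a)|^{n-1}\Vert\psi\varphi^n\Vert_{\ast}$, and the same geometric-series and head/tail estimates. The only difference is that you explicitly justify passing from the partial sums to the infinite series via lower semicontinuity of $\Vert\cdot\Vert_{\ast}$ under local uniform convergence, a step the paper applies without comment.
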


\begin{proof} 
We only prove the second inequality, because the first inequality follows with a similar argument.
Because $\wco f_a = (|\varphi(a)|^2 - 1)\sum_{n=0}^\infty \overline{\varphi(a)}^n \psi\varphi^{n+1}$, we have
$$\|\wco f_a\|_{\ast} \le (1 - |\varphi(a)|^2) \sum_{n=0}^\infty |\varphi(a)|^n \|\psi\varphi^{n+1}\|_{\ast},$$
and for each $N$, that
$$\|\wco f_a\|_{\ast} \le \left(1 - |\varphi(a)|^2\right) \left(\sum_{n=0}^N |\varphi(a)|^n \|\psi\varphi^{n+1}\|_{\ast} + \sum_{n=N+1}^\infty|\varphi(a)|^n \|\psi\varphi^{n+1}\|_{\ast}\right)$$
$$\le \left(1 - |\varphi(a)|^2\right) \left(\sum_{n=0}^N |\varphi(a)|^n \|\psi \varphi^{n+1}\|_{\ast} + \sup_{n\ge N+1} \|\psi\varphi^{n+1}\|_{\ast} \frac{|\varphi(a)|^{N +1}}{1 - |\varphi(a)|}\right).$$
Therefore
\begin{align*}
\limsup_{|\varphi(a)|\to 1} \|\wco f_a\|_{\ast}
&\le 2\sup_{n\ge N+1}\|\psi \varphi^{n+1}\|_{\ast},
\end{align*}
and we complete the proof by letting $N\to\infty$.
\end{proof}


\section{Proof of Theorem \ref{mainthm1}}\label{sec_pfofthm1}

The proof of Theorem \ref{mainthm1} follows immediately from the next two lemmas.

\begin{lemma} If $\wco$ is bounded on BMOA, then
\begin{align*}
\Vert \wco \Vert_{L(BMOA)} \lesssim  \sup_{n\ge 0} \Vert \psi \varphi^n \Vert_{\ast} + \sup_{a\in\BD}\beta(a) + |\psi(0)| L(\varphi(0)).
\end{align*}
\end{lemma}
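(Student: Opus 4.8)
The plan is to bound the operator norm by splitting the $BMOA$ norm of $\wco f$ into its value at the origin and its seminorm, and to control each separately over the unit ball $\Vert f \Vert \le 1$. For the point-evaluation part, observe that $(\wco f)(0) = \psi(0) f(\varphi(0))$, so Lemma \ref{lemmaptwiseie} gives $|(\wco f)(0)| \lesssim |\psi(0)| L(\varphi(0)) \Vert f \Vert$; this produces directly the term $|\psi(0)| L(\varphi(0))$.

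For the seminorm part, I would first apply Lemma \ref{lemmaupperest}(iii) to bound $\Vert \transf{\wco f}{a}\Vert_2$ by $\Vert \transf{\psi}{a}\transf{f\circ\varphi}{a}\Vert_2 + (\alpha(a)+\beta(a))\Vert f \Vert_{\ast}$, and then control the first summand by Lemma \ref{lemmaupperest}(iv), which dominates it by a constant multiple of $\Vert f \Vert_{\ast}\sup_{w\in\BD}\beta(w)$. Taking the supremum over $a$ thus reduces the seminorm estimate to controlling $\sup_a \alpha(a) + \sup_a \beta(a)$, so the real task becomes the elimination of $\sup_a \alpha(a)$ in favour of $\sup_{n\ge 0}\Vert \psi\varphi^n\Vert_{\ast}$.

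This elimination is the heart of the argument. Here I would invoke Lemma \ref{lemmaupperest}(i) with the test functions $f_a = \sigma_{\varphi(a)} - \varphi(a)$ to write $\alpha(a) \lesssim \beta(a)/L(\varphi(a)) + \Vert \wco f_a\Vert_{\ast}$. The first term is harmless: since $\beta(a)/L(\varphi(a)) = \Vert \transf{\psi}{a}\Vert_2 \le \Vert \psi\Vert_{\ast}$ and $L(\varphi(a)) \ge \log 2$, one has $\Vert \psi \Vert_{\ast} \lesssim \sup_a \beta(a)$. The second term is precisely where the powers $\varphi^n$ enter, via Lemma \ref{lemmaupperest2}, which yields $\sup_a \Vert \wco f_a\Vert_{\ast} \le 2\sup_{n\ge 0}\Vert \psi\varphi^n\Vert_{\ast}$. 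Together these give $\sup_a \alpha(a) \lesssim \sup_a \beta(a) + \sup_{n\ge 0}\Vert \psi\varphi^n\Vert_{\ast}$.

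Assembling the three contributions produces $\Vert \wco \Vert_{L(BMOA)} \lesssim |\psi(0)| L(\varphi(0)) + \sup_a \alpha(a) + \sup_a \beta(a)$, which by the previous step is $\lesssim |\psi(0)| L(\varphi(0)) + \sup_{n\ge 0}\Vert \psi\varphi^n\Vert_{\ast} + \sup_a \beta(a)$, as desired. The main obstacle, as noted, is the transition from $\sup_a \alpha(a)$ to $\sup_{n\ge 0}\Vert \psi\varphi^n\Vert_{\ast}$; the remaining steps are a routine combination of the preparatory estimates of Section \ref{sec_backgd}. The boundedness hypothesis enters implicitly to guarantee that the relevant suprema are finite and that Lemma \ref{lemmaupperest2} is applicable.
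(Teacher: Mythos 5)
Your proposal is correct and follows essentially the same route as the paper: Lemma \ref{lemmaupperest}(iii)--(iv) for the seminorm bound, Lemma \ref{lemmaupperest}(i) combined with Lemma \ref{lemmaupperest2} (and the lower bound $L(\varphi(a))\ge\log 2$) to replace $\sup_a\alpha(a)$ by $\sup_a\beta(a)+\sup_{n\ge 0}\Vert\psi\varphi^n\Vert_{\ast}$, and Lemma \ref{lemmaptwiseie} for the evaluation at the origin.
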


\begin{proof} 
By Lemma \ref{lemmaupperest} (iii) and (iv), we have
\begin{align*}
\Vert \wco f\Vert_{\ast}  \lesssim 
\sup_{a\in\UnitDisk}\left(\alpha(a) + \beta(a) \right)\Vert f \Vert_{\ast},
\end{align*}
for all $f\in BMOA$.
Hence, by Lemma \ref{lemmaupperest} (i) and Lemma \ref{lemmaupperest2},
\begin{align*}
\alpha(a) \lesssim 
\beta(a) + \sup_{n\ge 0} \Vert \psi \varphi^n \Vert_{\ast},
\end{align*}
because $L(\varphi(a))$ is bounded from below. Thus
\begin{align*}
\Vert \wco f\Vert_{\ast}  \lesssim 
\left(\sup_{n\ge 0} \Vert \psi \varphi^n \Vert_{\ast} + \sup_{a\in\UnitDisk}\beta(a) \right)\Vert f \Vert_{\ast}.
\end{align*}
We further have $|(\wco f)(0)|=|\psi(0) \Vert f(\varphi(0))|\lesssim |\psi(0)| L(\varphi(0)) \Vert f\Vert$, by Lemma \ref{lemmaptwiseie}, which completes the proof.
\end{proof}

\begin{lemma} Suppose that $\wco\colon BMOA \to BMOA$ is bounded. Then
\begin{align*}
\Vert \wco \Vert_{L(BMOA)} \gtrsim  \sup_{n\ge 0} \Vert \psi \varphi^n \Vert_{\ast} + \sup_{a}\beta(a) + |\psi(0)| L(\varphi(0)).
\end{align*}
\end{lemma}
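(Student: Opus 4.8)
The plan is to bound each of the three summands on the right-hand side by $\Vert\wco\Vert_{L(BMOA)}$ separately. Two of them are handled by explicit test functions, while the genuinely delicate point is the term $\sup_a\beta(a)$, where the estimates of Lemma~\ref{lemmaupperest} couple $\alpha(a)$ and $\beta(a)$ to one another.

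First I would dispose of the two easy terms. Writing $\mathbf 1$ for the constant function $1$, we have $\wco\mathbf 1=\psi$ and $\Vert\mathbf 1\Vert=1$, so that $\Vert\psi\Vert\le\Vert\wco\Vert_{L(BMOA)}$; in particular $\Vert\psi\Vert_\ast\le\Vert\wco\Vert_{L(BMOA)}$, a fact I shall reuse below. For the power term, note $\wco(z^n)=\psi\varphi^n$, and since $z^n\in H^\infty\subset BMOA$ with $\Vert z^n\Vert\lesssim 1$ uniformly in $n$, we get
\begin{align*}
\Vert\psi\varphi^n\Vert_\ast\le\Vert\wco(z^n)\Vert\le\Vert\wco\Vert_{L(BMOA)}\Vert z^n\Vert\lesssim\Vert\wco\Vert_{L(BMOA)},
\end{align*}
uniformly in $n$, hence $\sup_n\Vert\psi\varphi^n\Vert_\ast\lesssim\Vert\wco\Vert_{L(BMOA)}$. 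For the point-evaluation term I would test against $k(z)=\log\frac{2}{1-\overline{\varphi(0)}z}$, which belongs to a bounded subset of $BMOA$ and satisfies $k(\varphi(0))=L(\varphi(0))$; then
\begin{align*}
|\psi(0)|L(\varphi(0))=|(\wco k)(0)|\le\Vert\wco k\Vert\le\Vert\wco\Vert_{L(BMOA)}\Vert k\Vert\lesssim\Vert\wco\Vert_{L(BMOA)}.
\end{align*}

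For $\sup_a\beta(a)$ I would first reduce $\beta(a)$ to $\alpha(a)$ by means of Lemma~\ref{lemmaupperest}(ii). With $g_a$ as there, $\sup_a\Vert g_a\Vert<\infty$ gives $\Vert\wco g_a\Vert_\ast\le\Vert\wco\Vert_{L(BMOA)}\Vert g_a\Vert\lesssim\Vert\wco\Vert_{L(BMOA)}$, while Lemma~\ref{lemmaupperest}(iv) together with $L(\varphi(a))\ge\log 2$ bounds $\Vert\transf{\psi}{a}\transf{g_a\circ\varphi}{a}\Vert_2\lesssim\Vert\wco\Vert_{L(BMOA)}$. Hence Lemma~\ref{lemmaupperest}(ii) yields $\beta(a)\lesssim\Vert\wco\Vert_{L(BMOA)}+\alpha(a)$. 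In the reverse direction, Lemma~\ref{lemmaupperest}(i) combined with Lemma~\ref{lemmaupperest2} and the bound on $\sup_n\Vert\psi\varphi^n\Vert_\ast$ established above gives $\alpha(a)\lesssim\beta(a)/L(\varphi(a))+\Vert\wco\Vert_{L(BMOA)}$. Substituting the latter into the former produces the self-referential inequality
\begin{align*}
\beta(a)\le C\,\Vert\wco\Vert_{L(BMOA)}+\frac{C}{L(\varphi(a))}\,\beta(a)
\end{align*}
for some universal constant $C$.

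The main obstacle is precisely that this inequality closes only when the coefficient $C/L(\varphi(a))$ is smaller than $1$, so I would split into two cases according to the size of $L(\varphi(a))$. If $L(\varphi(a))\ge 2C$, the fractional term is absorbed into the left-hand side and $\beta(a)\le 2C\Vert\wco\Vert_{L(BMOA)}$. If instead $L(\varphi(a))<2C$, I use the definition of $\beta$ together with $\Vert\transf{\psi}{a}\Vert_2\le\Vert\psi\Vert_\ast$ and the first-step bound $\Vert\psi\Vert_\ast\le\Vert\wco\Vert_{L(BMOA)}$ to obtain
\begin{align*}
\beta(a)=L(\varphi(a))\Vert\transf{\psi}{a}\Vert_2<2C\Vert\psi\Vert_\ast\le 2C\Vert\wco\Vert_{L(BMOA)}.
\end{align*}
In either case $\beta(a)\lesssim\Vert\wco\Vert_{L(BMOA)}$ uniformly in $a$, so taking the supremum over $a$ and adding the three contributions completes the proof.
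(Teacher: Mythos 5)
Your proof is correct, but it takes a genuinely different and much longer route than the paper. The paper's entire proof is to combine the known norm estimate \eqref{eq_norm_bmoa} from \cite{lai2} --- which already contains $\sup_a\beta(a)+|\psi(0)|L(\varphi(0))\lesssim\Vert\wco\Vert_{L(BMOA)}$ --- with the uniform boundedness of $z\mapsto z^n$ in $BMOA$, which handles the remaining term exactly as in your first step. You instead re-derive the $\beta$-bound and the $|\psi(0)|L(\varphi(0))$-bound from scratch: the test function $k(z)=\log\frac 2{1-\overline{\varphi(0)}z}$ for the latter, and for the former a bootstrap through Lemma \ref{lemmaupperest}(i),(ii),(iv) and Lemma \ref{lemmaupperest2}, closed by an absorption argument with the case split $L(\varphi(a))\ge 2C$ versus $L(\varphi(a))<2C$ (where the a priori finiteness $\beta(a)\le L(\varphi(a))\Vert\psi\Vert_\ast<\infty$, needed to absorb, is secured by $\psi=\wco\mathbf 1\in BMOA$). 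All of this checks out. What your route buys is an argument that does not quote \eqref{eq_norm_bmoa} directly; what it costs is length, and one should note that the self-containment is only apparent, since the proof of Lemma \ref{lemmaupperest}(iv) in the paper itself invokes \eqref{eq_norm_bmoa} to get $\Vert\psi\Vert_\ast\le\sup_w\beta(w)\lesssim\Vert\wco\Vert_{L(BMOA)}$ (though you only use the second entry of the min there, for which your independent bound $\Vert\psi\Vert_\ast\le\Vert\wco\Vert_{L(BMOA)}$ would suffice to rerun that proof). Given that \eqref{eq_norm_bmoa} is a citable result of \cite{lai2}, the paper's two-line proof is the intended one, but your derivation is a valid alternative.
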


\begin{proof}
Combine the norm estimate \eqref{eq_norm_bmoa} and the fact that the functions $z\mapsto z^n$ belong uniformly to $BMOA$ for $n\ge 0$.
\end{proof}


\section{Proof of Theorem \ref{mainthm2}}\label{sec_pfofthm2}

In this section we prove Theorem \ref{mainthm2}. We split the proof into three lemmas of which Theorem \ref{mainthm2} is a direct consequence.

\begin{lemma} Suppose that $\wco$ is bounded on $BMOA$. Then
\begin{align*}
\Vert \wco \Vert_{e,BMOA} \gtrsim  \limsup_{n\to\infty} \Vert \psi \varphi^n \Vert_{\ast}+ \limsup_{|\varphi(a)|\to 1}\beta(a).
\end{align*}
\end{lemma}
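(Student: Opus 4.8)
The plan is to establish the two lower bounds $\Vert\wco\Vert_{e,BMOA}\gtrsim\limsup_{n\to\infty}\Vert\psi\varphi^n\Vert_\ast$ and $\Vert\wco\Vert_{e,BMOA}\gtrsim\limsup_{|\varphi(a)|\to1}\beta(a)$ separately; the stated estimate is then immediate, since a sum of two nonnegative quantities is comparable to the larger one. Both bounds rest on the standard principle (coming from the weak-star structure of $BMOA=(H^1)^\ast$) that if $(f_j)$ is a bounded sequence in $BMOA$ converging to $0$ uniformly on compact subsets of $\BD$, then $\Vert Kf_j\Vert\to0$ for every compact operator $K$ on $BMOA$. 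Consequently $\Vert\wco-K\Vert_{L(BMOA)}\ge\limsup_j\Vert(\wco-K)f_j\Vert/\sup_j\Vert f_j\Vert\gtrsim\limsup_j\Vert\wco f_j\Vert$, and taking the infimum over compact $K$ gives $\Vert\wco\Vert_{e,BMOA}\gtrsim\limsup_j\Vert\wco f_j\Vert$ for any such test family.

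For the first term I would take the monomials $f_j=z^j$, which satisfy $\sup_j\Vert z^j\Vert<\infty$ and $z^j\to0$ uniformly on compacts. Since $\wco z^j=\psi\varphi^j$ and $|\psi(0)\varphi(0)^j|\to0$ (because $|\varphi(0)|<1$), the principle above yields $\Vert\wco\Vert_{e,BMOA}\gtrsim\limsup_j\Vert\psi\varphi^j\Vert_\ast$. Feeding this into Lemma~\ref{lemmaupperest}(i) together with Lemma~\ref{lemmaupperest2}, I also record, for any sequence $(a_j)$ with $|\varphi(a_j)|\to1$, that $\limsup_j\alpha(a_j)\lesssim\limsup_j\beta(a_j)/L(\varphi(a_j))+\limsup_j\Vert\wco f_{a_j}\Vert_\ast\lesssim\limsup_{n}\Vert\psi\varphi^n\Vert_\ast\lesssim\Vert\wco\Vert_{e,BMOA}$, where $f_{a_j}=\sigma_{\varphi(a_j)}-\varphi(a_j)$ and the first summand vanishes because $\beta$ is bounded while $L(\varphi(a_j))\to\infty$.

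For the second term I would pick a sequence $(a_j)$ with $|\varphi(a_j)|\to1$ realizing $\limsup_{|\varphi(a)|\to1}\beta(a)$ and use the functions $g_{a_j}=h_{a_j}^2/h_{a_j}(\varphi(a_j))$ from Lemma~\ref{lemmaupperest}(ii). These satisfy $\sup_a\Vert g_a\Vert<\infty$, and since $g_a=h_a^2/L(\varphi(a))$ with $h_a$ bounded on each compact while $L(\varphi(a))\to\infty$, one checks that $g_{a_j}\to0$ uniformly on compacts; hence $\Vert\wco\Vert_{e,BMOA}\gtrsim\limsup_j\Vert\wco g_{a_j}\Vert_\ast$. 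Now Lemma~\ref{lemmaupperest}(ii) reads $\beta(a_j)\lesssim\Vert\transf{\psi}{a_j}\transf{g_{a_j}\circ\varphi}{a_j}\Vert_2+\Vert\wco g_{a_j}\Vert_\ast+\alpha(a_j)$. The first term tends to $0$ by Lemma~\ref{lemmaupperest}(iv), because $\sup_a\Vert g_a\Vert_\ast<\infty$ forces $\Vert\transf{\psi}{a}\transf{g_a\circ\varphi}{a}\Vert_2\lesssim\Vert\wco\Vert_{L(BMOA)}/\sqrt{L(\varphi(a))}\to0$; the last term is controlled by the bound on $\limsup_j\alpha(a_j)$ from the previous paragraph. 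Taking $\limsup_j$ and absorbing the $\alpha$-contribution into the left side then yields $\Vert\wco\Vert_{e,BMOA}\gtrsim\limsup_{|\varphi(a)|\to1}\beta(a)$.

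The main obstacle is the interplay in this last step: the test functions $g_{a_j}$ only produce $\Vert\wco g_{a_j}\Vert_\ast$, whereas Lemma~\ref{lemmaupperest}(ii) recovers $\beta(a_j)$ only up to the extra terms $\Vert\transf{\psi}{a_j}\transf{g_{a_j}\circ\varphi}{a_j}\Vert_2$ and $\alpha(a_j)$, both of which must be shown asymptotically negligible relative to $\Vert\wco\Vert_{e,BMOA}$. The cross term is killed by the $1/\sqrt{L(\varphi(a))}$ decay built into Lemma~\ref{lemmaupperest}(iv), while the $\alpha$ term can be tamed only after the first-term bound $\Vert\wco\Vert_{e,BMOA}\gtrsim\limsup_n\Vert\psi\varphi^n\Vert_\ast$ has already been secured, so the order of the two arguments is essential. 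A secondary point requiring care is the verification that $g_{a_j}\to0$ uniformly on compacts as $|\varphi(a_j)|\to1$, which is exactly where the squared-logarithm normalization and the divergence of $L(\varphi(a_j))$ are used.
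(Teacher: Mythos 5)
Your proposal is correct and follows essentially the same route as the paper: the weak-null test-function principle applied first to $z^n$ and then to the functions $g_{a_n}$, with Lemma \ref{lemmaupperest} (ii) and (iv) recovering $\beta$ and the $1/\sqrt{L(\varphi(a))}$ decay killing the cross term. The only (harmless) deviation is that you control $\limsup_{|\varphi(a)|\to 1}\alpha(a)$ via Lemma \ref{lemmaupperest} (i) and Lemma \ref{lemmaupperest2} through the already-established bound $\limsup_n\Vert\psi\varphi^n\Vert_{\ast}\lesssim\Vert\wco\Vert_{e,BMOA}$, whereas the paper obtains the same bound by testing directly against $f_{a_n}=\sigma_{\varphi(a_n)}-\varphi(a_n)$; both arguments are valid and use only material already in the paper.
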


\begin{proof}
A standard argument shows that for any $f_n\in VMOA$ such that $\sup_n\Vert f_n \Vert<\infty$ and $f_n \to 0$ weakly in BMOA as $n\to\infty$, we have
\begin{align*}
\Vert \wco \Vert_{e,BMOA} \gtrsim \limsup_{n\to\infty} \Vert \wco f_n \Vert;
\end{align*}
see, for example, \cite{GLL}. We next apply this basic result to three different sequences of test functions.
First, by setting $f_n(z)=z^n$, we get 
\begin{align}\label{eq_lowerest1}
\Vert \wco \Vert_{e,BMOA} \gtrsim \limsup_{n\to\infty} \Vert \wco z^n \Vert_{\ast}=\limsup_{n\to\infty} \Vert \psi \varphi^n \Vert_{\ast}. 
\end{align}
If now $\Vert \varphi \Vert_\infty < 1$, then $\limsup_{|\varphi(a)|\to 1}\beta(a)=0$ and the proof is complete. Otherwise, there
are points $a_n\in\UnitDisk$ such that $|\varphi(a_n)|\to 1$ and 
\begin{align*}
\lim_n\alpha(a_n) = \limsup_{|\varphi(a)|\to 1}\alpha(a).
\end{align*}
Define $f_n(z)=\sigma_{\varphi(a_n)}-\varphi(a_n)$. Then, by Lemma \ref{lemmaupperest} (i),
\begin{align}\label{eq_lowerest2}
\begin{split}
\Vert \wco \Vert_{e,BMOA} 
&\gtrsim \limsup_{n\to\infty} \Vert \wco f_n \Vert_{\ast}\\
& \gtrsim \limsup_{n\to\infty}\left(\alpha(a_n)-\Vert \wco \Vert_{L(BMOA)}/L(\varphi(a_n))\right)\\
&=\limsup_{|\varphi(a)|\to 1}\alpha(a).
\end{split}
\end{align}
Finally, choose 
$a_n\in\UnitDisk$ such that $|\varphi(a_n)|\to 1$ and 
\begin{align*}
\lim_{n\to\infty}\beta(a_n)=\limsup_{|\varphi(a)|\to 1}\beta(a).
\end{align*}
Let $g_{a_n}$ be as in Lemma \ref{lemmaupperest} (ii). It was shown in \cite[p.~35]{lai2} that the functions $g_{a_n}$ are uniformly bounded in $BMOA$, $g_{a_n} \in VMOA$ and $g_{a_n}$ converges weakly to zero in $BMOA$. By these facts
and the estimates \eqref{eq_lowerest2}, we get
\begin{align*}
\Vert \wco \Vert_{e,BMOA} &\gtrsim \limsup_{n\to\infty} \left(\Vert \wco g_{a_n} \Vert_{\ast} + \alpha(a_n)\right).
\end{align*}
By Lemma \ref{lemmaupperest} (ii) and (iv),
\begin{align}\label{eq_lowerest3}
\begin{split}
\Vert \wco \Vert_{e,BMOA} 
&\gtrsim\limsup_{n\to\infty} \left(\beta(a_n)-\Vert g_{a_n} \Vert_{\ast}\Vert\wco\Vert_{L(BMOA)}/\sqrt{L(\varphi(a_n))}\right)\\
&= \limsup_{n\to\infty} \beta(a_n).
\end{split}
\end{align}
Combining above estimates \eqref{eq_lowerest1} and \eqref{eq_lowerest3} completes the proof.
\end{proof}

In next lemmas we will next use the well-known fact that every $f\in H^p$ has the almost everywhere on $\UnitCircle$ existing radial limit function (also denoted by $f$) and that the $H^p$ norm of $f$ coincides with the $L^p(\mathrm{d\theta}/2\pi)$ norm of $f$ on $\UnitCircle$.
For $t\in (0,1)$ we define the sets 
$$E(\varphi,a,t)=\{\xi\in\UnitCircle \colon |(\sigma_{\varphi(a)}\circ\varphi\circ\sigma_a)(\xi)|>t\}\subset\UnitCircle.$$  

\begin{lemma}\label{lemmaupperest1} We have that
$$\lim_{r\to 1}\,\limsup_{t\to 1} \sup_{|\varphi(a)|\le r}\left(\int_{E(\varphi,a,t)} |\psi(\sigma_a(\e^{i\theta}))|^4 \,
\frac{\mathrm d\theta}{2\pi}\right)^{1/4}\lesssim \limsup_{n\to\infty} \|\psi\varphi^n\|_{\ast}.$$
\end{lemma}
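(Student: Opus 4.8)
The plan is to transfer everything onto the uniformly $BMOA$-bounded family $\psi\varphi^n=\wco(z^n)$ by a change of variables under $\varphi\circ\sigma_a$. Write $\tau_a=\sigma_{\varphi(a)}\circ\varphi\circ\sigma_a$, so that $\tau_a(0)=0$, $E(\varphi,a,t)=\{\xi\colon|\tau_a(\xi)|>t\}$, and $\varphi\circ\sigma_a=\sigma_{\varphi(a)}\circ\tau_a$. From the standard identity $1-|\sigma_{\varphi(a)}(w)|^2=(1-|\varphi(a)|^2)(1-|w|^2)|1-\overline{\varphi(a)}w|^{-2}$ together with $|\varphi(a)|\le r$ and $|w|\le 1$, one obtains on $E(\varphi,a,t)$ the lower bound $|\varphi\circ\sigma_a|^2\ge 1-\delta$, where $\delta=\delta(r,t)=(1-t^2)/(1-r)^2\to 0$ as $t\to 1$ for fixed $r$. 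Hence $|\psi\circ\sigma_a|\le(1-\delta)^{-n/2}\,|(\psi\varphi^n)\circ\sigma_a|$ pointwise on $E(\varphi,a,t)$, and therefore $\int_{E(\varphi,a,t)}|\psi\circ\sigma_a|^4\,\frac{d\theta}{2\pi}\le(1-\delta)^{-2n}\int_{E(\varphi,a,t)}|(\psi\varphi^n)\circ\sigma_a|^4\,\frac{d\theta}{2\pi}$ for every $n$.

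Next I would split $(\psi\varphi^n)\circ\sigma_a=\transf{\psi\varphi^n}{a}+(\psi\varphi^n)(a)$ and treat the two pieces separately. For the oscillation, Lemma \ref{lemmajn} gives $\int_{E(\varphi,a,t)}|\transf{\psi\varphi^n}{a}|^4\le\|\transf{\psi\varphi^n}{a}\|_4^4\le\|\psi\varphi^n\|_{\ast,4}^4\lesssim\|\psi\varphi^n\|_\ast^4$, which is harmless. For the value $(\psi\varphi^n)(a)=\psi(a)\varphi(a)^n$ the crucial input is the measure estimate coming from $\sup_w\alpha(w)<\infty$ (guaranteed by \eqref{eq_norm_bmoa} since $\wco$ is bounded): as $|\tau_a|>t$ on $E(\varphi,a,t)$, one has $|\psi(a)|^2\,t^2\,|E(\varphi,a,t)|\le|\psi(a)|^2\|\tau_a\|_2^2=\alpha(a)^2$, so that $|(\psi\varphi^n)(a)|^2|E(\varphi,a,t)|\le|\varphi(a)|^{2n}\alpha(a)^2/t^2\le r^{2n}\big(\sup_w\alpha(w)\big)^2/t^2$, uniformly in $a$. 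This is precisely where the possible unboundedness of $\psi$ is absorbed: when $|\psi(a)|$ is large, $\|\tau_a\|_2$, and with it $|E(\varphi,a,t)|$, is correspondingly small.

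With these ingredients I would run the limits with $n=n(t)\to\infty$ chosen to grow slowly as $t\to 1$, say $n(t)\approx\delta(r,t)^{-1/2}$, so that $(1-\delta)^{-2n(t)}\to 1$ while $r^{n(t)}\to 0$ (recall $r<1$ is fixed). Combining the two pieces, letting first $t\to 1$ and then $r\to 1$, the oscillation contribution tends to $\limsup_{n\to\infty}\|\psi\varphi^n\|_\ast$ while the value contribution tends to $0$; taking fourth roots yields the asserted inequality.

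The hard part is the value term in the fourth power. The estimate above controls $|(\psi\varphi^n)(a)|^2|E(\varphi,a,t)|$ and hence closes the analogous $L^2$ statement at once; but for the stated $L^4$ bound one is led to $|(\psi\varphi^n)(a)|^4|E(\varphi,a,t)|=|(\psi\varphi^n)(a)|^2\cdot\big(|(\psi\varphi^n)(a)|^2|E(\varphi,a,t)|\big)$, in which the leftover factor $|(\psi\varphi^n)(a)|^2$ is not bounded uniformly over $\{|\varphi(a)|\le r\}$. Simply splitting off the value is thus too lossy, since $(\psi\varphi^n)(a)$ and $|E(\varphi,a,t)|$ cannot be treated as independent. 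The way I expect to resolve this is to keep the value tied to $E(\varphi,a,t)$ and to exploit the higher integrability that the uniformly bounded functions $(\psi\varphi^n)\circ\sigma_a$ enjoy through the reverse H\"older (John--Nirenberg) inequality of Lemma \ref{lemmajn}, upgrading the clean $L^2$ control over $E(\varphi,a,t)$ to $L^4$ control, so that the smallness of $|E(\varphi,a,t)|$ genuinely governs the integral even where $|\psi(a)|$ is large. Carrying out this balance --- reverse H\"older against the measure estimate, with $n$ tuned to $t$ --- uniformly over $\{|\varphi(a)|\le r\}$ is the technical heart of the proof; the reductions in the first three steps are routine by comparison.
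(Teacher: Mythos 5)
Your reductions up to the point you flag are sound and close to the paper's (the paper also passes between $E(\varphi,a,t)$ and $\tilde E(\varphi,a,t)=\{\xi:|\varphi\circ\sigma_a(\xi)|>t\}$ using the comparability of $1-|\sigma_{\varphi(a)}\circ\varphi\circ\sigma_a|$ and $1-|\varphi\circ\sigma_a|$ when $|\varphi(a)|\le r$), but the difficulty you identify at the value term is a genuine gap, and the resolution you sketch cannot close it. After your pointwise comparison you must bound $\int_{E}|(\psi\varphi^n)\circ\sigma_a|^4$, and splitting off the value leaves $|(\psi\varphi^n)(a)|^4|E(\varphi,a,t)| = |\psi(a)|^4|\varphi(a)|^{4n}|E(\varphi,a,t)|$. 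The measure estimate coming from $\alpha$ absorbs exactly two powers of $|\psi(a)|$, leaving a factor $r^{4n}|\psi(a)|^2$ which is not uniformly small over $\{|\varphi(a)|\le r\}$ for any choice of $n=n(t)$: by Lemma \ref{lemmaptwiseie} one only has $|\psi(a)|\lesssim L(a)\Vert\psi\Vert$, and $\sup_{|\varphi(a)|\le r}L(a)$ is in general infinite (boundedness of $\wco$ forces $\Vert\sigma_{\varphi(a)}\circ\varphi\circ\sigma_a\Vert_2$ to be small where $|\psi(a)|$ is large, but that is already spent in absorbing the first two powers). The John--Nirenberg/reverse H\"older inequality cannot rescue this: it upgrades integrability of the \emph{oscillation} $\transf{\psi\varphi^n}{a}$, which you have already handled, and says nothing about the constant $(\psi\varphi^n)(a)$ paired with $|E(\varphi,a,t)|$; the full function $(\psi\varphi^n)\circ\sigma_a$ is not uniformly bounded in $BMOA$ precisely because of its value at the origin. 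So "keeping the value tied to $E$" does not produce the extra decay you need.

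The missing idea is to choose the comparison factor so that no value of $\psi$ (or of $\psi\varphi^n$) ever appears. The paper multiplies $\psi\circ\sigma_a$ by $\varphi^n\circ\sigma_a-\varphi^n(a)$ rather than by $\varphi^n\circ\sigma_a$ alone. On $\tilde E(\varphi,a,t)$ this factor is bounded below by $t^n-r^n$ (which tends to $1$ in the iterated limit $t\to1$, $n\to\infty$), while algebraically
\begin{align*}
(\psi\circ\sigma_a)\bigl(\varphi^n\circ\sigma_a-\varphi^n(a)\bigr)=\transf{\psi\varphi^n}{a}-\varphi^n(a)\,\transf{\psi}{a},
\end{align*}
a difference of two \emph{pure oscillations}. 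Its $L^4$ norm is at most $\Vert\psi\varphi^n\Vert_{\ast,4}+r^n\Vert\psi\Vert_{\ast,4}$, the second term vanishing as $n\to\infty$; the problematic constant $\psi(a)\varphi^n(a)$ cancels between the two pieces instead of having to be estimated against $|E(\varphi,a,t)|$. With this decomposition the limits go through exactly as you planned, and Lemma \ref{lemmajn} converts $\Vert\cdot\Vert_{\ast,4}$ back to $\Vert\cdot\Vert_{\ast}$. I recommend replacing your pointwise comparison and subsequent splitting by this single multiplicative identity; the rest of your outline then works.
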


\begin{proof} The triangle inequality yields that
\begin{align*}
\Vert (\psi\circ\sigma_a) & (\varphi^n\circ\sigma_a -\varphi^n(a))\Vert_4 \\
\le
&\Vert (\psi\circ\sigma_a)(\varphi^n\circ\sigma_a) -\psi(a)\varphi^n(a) \Vert_4 +
 \Vert (\psi\circ\sigma_a -\psi(a))\varphi^n(a) \Vert_4.
\end{align*}
Now fix $r \in (0,1)$. Then for every $|\varphi(a)|\le r$, we obtain that
$$ \Vert (\psi\circ\sigma_a)(\varphi^n\circ\sigma_a -\varphi^n(a)) \Vert_4\le \Vert \psi\varphi^n \Vert_{\ast,4} + r^n  \Vert \psi \Vert_{\ast,4}.$$
For $t \in(r,1)$ we get
\begin{align*}
&\limsup_{t\to 1} \sup_{|\varphi(a)|\le r}\int_{ \tilde E(\varphi,a,t)} |\psi(\sigma_a(\e^{i\theta}))|^4 \,\frac{\mathrm d\theta}{2\pi}\\
&=\limsup_{n\to\infty}\limsup_{t\to 1} \sup_{|\varphi(a)|\le r} (t^n - |\varphi(a)|^n)^4 
\int_{ \tilde E(\varphi,a,t)} |\psi(\sigma_a(\e^{i\theta}))|^4 \, \frac{\mathrm d\theta}{2\pi}\\
&\le\limsup_{n\to\infty}\limsup_{t\to 1} \sup_{|\varphi(a)|\le r}
\int_{ \tilde E(\varphi,a,t)}|(\psi\circ\sigma_a)(\e^{i\theta})((\varphi^n\circ\sigma_a)(e^{i\theta}) - \varphi^n(a))|^4 \, \frac{\mathrm d\theta}{2\pi}\\
&\le\limsup_{n\to\infty}\limsup_{t\to 1} \sup_{|\varphi(a)|\le r}
\Vert (\psi\circ\sigma_a)(\varphi^n\circ\sigma_a -\varphi^n(a)) \Vert_4^4,
\end{align*}
where  $\tilde E(\varphi,a,t)=\{\xi\in\partial\BD\colon |\varphi\circ\sigma_a(\xi)| > t\}.$ 
Therefore
$$\limsup_{t\to 1} \sup_{|\varphi(a)|\le r}\int_{ \tilde E(\varphi,a,t)} |\psi(\sigma_a(\e^{i\theta}))|^4 \, \frac{\mathrm d\theta}{2\pi}
\le \limsup_{n\to\infty}\|\psi\varphi^n\|^4_{\ast,4}.$$
By \cite[Remark 3.3]{lai2}, we have 
$$s(r)^{-1}(1 - |\varphi\circ\sigma_a(\xi)|) \le  1 - |\sigma_{\varphi(a)}\circ\varphi\circ\sigma_a(\xi)| \le s(r) (1 - |\varphi\circ\sigma_a(\xi)|),$$
for $|\varphi(a)|\le r$ and $\xi\in\UnitCircle$, where $s(r)=2(1+r)/(1-r)$.
In particular,
\begin{align*}
\tilde E(\varphi,a,1-s(r)^{-1}(1-t)) \subset  E (\varphi,a,t) \subset \tilde E(\varphi,a,1-s(r)(1-t)),
\end{align*}
for $1-s(r)^{-1} < t < 1$.
In view of these estimates, it follows that
\begin{align*}\limsup_{t\to 1}&\sup_{|\varphi(a)|\le r}\int_{ E(\varphi,a,t)} |\psi(\sigma_a(\e^{i\theta}))|^4 \, \frac{\mathrm d\theta}{2\pi}\\
&=\limsup_{t\to 1}\sup_{|\varphi(a)|\le r}
\int_{\tilde E(\varphi,a,t)} |\psi(\sigma_a(\e^{i\theta}))|^4 \, \frac{\mathrm d\theta}{2\pi}\\
&\le \limsup_{n\to\infty}\|\psi\varphi^n\|^4_{\ast,4}.
\end{align*}
We use  the equivalence of the seminorms $\|\cdot\|_{\ast}$ and $\|\cdot\|_{\ast,4}$ (Lemma \ref{lemmajn}) to complete the proof. 
\end{proof}

For $0<r<1$ and $0< t < 1$,  define 
$$Q(r,t)=r\overline\UnitDisk \cup\{\sigma_b(z)\in\UnitDisk\colon b\in r\overline\UnitDisk, z\in t\overline\UnitDisk\}.$$
Thus $Q(r,t)$ is a compact subset of $\UnitDisk$. 

\begin{lemma}Suppose that $\wco$ is bounded on $BMOA$. Then
\begin{align*}
\Vert \wco \Vert_{e,BMOA} \lesssim  \limsup_{n\to\infty} \Vert \psi \varphi^n \Vert_{\ast}+ \limsup_{|\varphi(a)|\to 1}\beta(a).
\end{align*}
\end{lemma}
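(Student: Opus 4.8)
The plan is to realize the essential norm as a limit of operator norms $\Vert\wco-K_\rho\Vert$ over an explicit family of compact operators, and then to estimate each difference by splitting the computation of the $BMOA$ seminorm according to the size of $|\varphi(a)|$. For $0<\rho<1$ let $R_\rho$ be the dilation $R_\rho f(z)=f(\rho z)$; then $\wco R_\rho=W_{\psi,\rho\varphi}$, and since $\Vert\rho\varphi\Vert_\infty\le\rho<1$ one sees from the compactness criterion of \cite{col} quoted in the introduction that each $W_{\psi,\rho\varphi}$ is compact, because $\Vert\psi(\rho\varphi)^n\Vert_\ast=\rho^n\Vert\psi\varphi^n\Vert_\ast\to0$ while the boundary condition is vacuous. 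Hence $\Vert\wco\Vert_{e,BMOA}\le\limsup_{\rho\to1}\Vert\wco(I-R_\rho)\Vert$. Writing $g=(I-R_\rho)f$ for $\Vert f\Vert\le1$, I note that $\Vert g\Vert\lesssim1$, since dilations do not increase the $BMOA$ norm, and that $g\to0$ uniformly on compact subsets of $\BD$ as $\rho\to1$, uniformly over the unit ball; the point evaluation $|(\wco g)(0)|=|\psi(0)|\,|f(\varphi(0))-f(\rho\varphi(0))|$ tends to $0$ for the same reason. It therefore remains to bound $\sup_a\Vert\transf{\wco g}{a}\Vert_2$, which I split as $\sup_{|\varphi(a)|>r}$ plus $\sup_{|\varphi(a)|\le r}$.

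On the region $|\varphi(a)|>r$ I would use Lemma \ref{lemmaupperest}(iii) to dominate $\Vert\transf{\wco g}{a}\Vert_2$ by $\Vert\transf{\psi}{a}\transf{g\circ\varphi}{a}\Vert_2+(\alpha(a)+\beta(a))\Vert g\Vert_\ast$. The first term is handled by Lemma \ref{lemmaupperest}(iv), whose bound involves the factor $\Vert\wco\Vert_{L(BMOA)}/\sqrt{L(\varphi(a))}$, which tends to $0$ as $r\to1$; for the second term, Lemma \ref{lemmaupperest}(i) together with Lemma \ref{lemmaupperest2} gives $\limsup_{|\varphi(a)|\to1}\alpha(a)\lesssim\limsup_n\Vert\psi\varphi^n\Vert_\ast+\limsup_{|\varphi(a)|\to1}\beta(a)$. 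Since these estimates use only $\Vert g\Vert_\ast\lesssim1$ and not the smallness of $g$, they are uniform in $\rho$, so letting $r\to1$ this region contributes at most $\limsup_n\Vert\psi\varphi^n\Vert_\ast+\limsup_{|\varphi(a)|\to1}\beta(a)$.

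The region $|\varphi(a)|\le r$ is the heart of the matter and the main obstacle. Here I would use $\Vert\transf{\wco g}{a}\Vert_2=\Vert(\wco g)\circ\sigma_a-\psi(a)g(\varphi(a))\Vert_2$ and split the defining integral over $E(\varphi,a,t)$ and its complement. If $\xi\notin E(\varphi,a,t)$ and $|\varphi(a)|\le r$, then by the very construction of $Q(r,t)$ one has $\varphi(\sigma_a(\xi))\in Q(r,t)$, a fixed compact subset of $\BD$, so $g(\varphi\sigma_a\xi)$ and $g(\varphi(a))$ are controlled by $\varepsilon_\rho:=\sup_{Q(r,t)}|g|\to0$; on $E(\varphi,a,t)$ I would instead discard $g$ via H\"older's inequality and Lemma \ref{lemmajn}, reducing matters to $\big(\int_{E(\varphi,a,t)}|\psi\circ\sigma_a|^4\big)^{1/4}$, which by Lemma \ref{lemmaupperest1} is $\lesssim\limsup_n\Vert\psi\varphi^n\Vert_\ast$ in the iterated limit $\lim_{r\to1}\limsup_{t\to1}$. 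The difficulty is that $|\psi(a)|$ need \emph{not} remain bounded on $\{|\varphi(a)|\le r\}$, so the compact‑region estimates cannot be closed by naive pointwise bounds. I plan to overcome this using three facts drawn from the preceding lemmas: first, $\sup_a\alpha(a)\lesssim\Vert\wco\Vert_{L(BMOA)}$, a consequence of Lemma \ref{lemmaupperest}(i), Lemma \ref{lemmaupperest2} and Theorem \ref{mainthm1}; second, the Chebyshev bound $m(E(\varphi,a,t))\le t^{-2}\Vert\sigma_{\varphi(a)}\circ\varphi\circ\sigma_a\Vert_2^2=\alpha(a)^2/(t^2|\psi(a)|^2)$, which tames the subtracted constant $\psi(a)g(\varphi(a))$ on $E(\varphi,a,t)$ because $|\psi(a)|^2m(E(\varphi,a,t))\lesssim\alpha(a)^2$; and third, the elementary inequality $|\psi(a)|\,\Vert\transf{\varphi}{a}\Vert_2\le2\alpha(a)$ arising from $|\sigma_{\varphi(a)}(w)|\ge|w-\varphi(a)|/2$, which, combined with the Cauchy estimate $\sup_{Q(r,t)}|g'|\lesssim\varepsilon_\rho$, lets me bound the oscillation of $g\circ\varphi$ on the complement of $E(\varphi,a,t)$ by $\varepsilon_\rho|\psi(a)|\Vert\transf{\varphi}{a}\Vert_2\lesssim\varepsilon_\rho\alpha(a)$. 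In each case the offending factor $|\psi(a)|$ is absorbed into the globally bounded quantity $\alpha(a)$, so the entire complement contributes only $O(\varepsilon_\rho)$.

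Finally I would assemble the pieces in the correct order of limits: for fixed $r$ and $t$ the complement‑of‑$E$ contribution is $O(\varepsilon_\rho)\to0$ as $\rho\to1$, so $\limsup_{\rho\to1}\Vert\wco(I-R_\rho)\Vert$ is bounded by the sum of the $|\varphi(a)|>r$ contribution and the $E(\varphi,a,t)$ contribution; letting $t\to1$ and then $r\to1$, and invoking Lemma \ref{lemmaupperest1} together with the limsup estimates for $\alpha$ and $\beta$, yields $\Vert\wco\Vert_{e,BMOA}\lesssim\limsup_n\Vert\psi\varphi^n\Vert_\ast+\limsup_{|\varphi(a)|\to1}\beta(a)$, as required. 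I expect the bookkeeping of these nested limits, and the uniformity over both the unit ball and the parameter $a$ needed at each stage, to be the most delicate part of turning this sketch into a complete proof.
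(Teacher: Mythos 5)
Your proposal is correct and follows essentially the same route as the paper: dilation operators as the compact approximants, the split according to $|\varphi(a)|\le r$ versus $|\varphi(a)|>r$, Lemma \ref{lemmaupperest}(iii)--(iv) on the far region, and the sets $E(\varphi,a,t)$ together with Lemma \ref{lemmaupperest1} and the compact set $Q(r,t)$ on the near region. The only real divergence is your Chebyshev/$\alpha(a)$ workaround for the possible unboundedness of $|\psi(a)|$ on $\{|\varphi(a)|\le r\}$: the paper sidesteps this issue entirely by writing $\transf{\wco g}{a}=\psi\circ\sigma_a\cdot\transf{g\circ\varphi}{a}+\transf{\psi}{a}\,g(\varphi(a))$, so that only $\Vert\transf{\psi}{a}\Vert_2\le\Vert\psi\Vert_\ast$ (never $|\psi(a)|$) multiplies the small constant $g(\varphi(a))$, and then controls the remaining term on the complement of $E(\varphi,a,t)$ via the Schwarz-type estimate of Lemma \ref{lemmaweightedlwie} and the bound $\Vert\psi\circ\sigma_a\cdot(\sigma_{\varphi(a)}\circ\varphi\circ\sigma_a)\Vert_2\lesssim\Vert\wco\Vert_{L(BMOA)}$.
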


\begin{proof}
For $n \ge 0$, let $r_n = n/(1+n)$ and define the linear operator $K_n$ on $BMOA$ by $(K_n f)(z)=f(r_n z)$.
Then standard arguments show that $\sup_n\Vert K_n \Vert \le 2$ and every $K_n$ is a compact operator. Moreover, $\sup_{z\in K}\sup_{\Vert f\Vert\le 1}|(S_n f)(z)|$ converges to zero for compact subsets $K\subset \UnitDisk$, where $S_n f = f - K_n f$; see, for example, \cite{GLL}. We now have 
\begin{align}\label{eqfinal1}
\begin{split}
\Vert \wco \Vert_{e,BMOA} &\le \liminf_{n\to\infty}\Vert \wco S_n \Vert \\
&=\liminf_{n\to\infty}\sup_{\Vert f\Vert\le 1}|\psi(0)(S_n f)(\varphi(0)) |\\
&\quad + \liminf_{n\to\infty}\sup_{\Vert f\Vert\le 1}\Vert \wco S_n f \Vert_{\ast}\\
&=\liminf_{n\to\infty}\sup_{\Vert f\Vert\le 1}\Vert \wco S_n f \Vert_{\ast}.
\end{split}
\end{align}

Fix $n\ge 0$, $f \in BMOA$ with $ \Vert f \Vert \le 1$, $r\in (0,1)$ and $t\in (\frac{1}{2},1)$. Then 
\begin{align}\label{eq_claim1}
\Vert \wco S_n f \Vert_{\ast} \le \sup_{|\varphi(a)|\le r}\Vert \transf{\wco S_n f}{a} \Vert_2 +
\sup_{|\varphi(a)|> r}\Vert \transf{\wco S_n f}{a} \Vert_2.
\end{align}
By Lemma \ref{lemmaupperest} (iii) and (iv), we get
\begin{align*}
\sup_{|\varphi(a)|> r}&\Vert \transf{\wco S_n f}{a} \Vert_2\\
&\lesssim \Vert  S_n f\Vert_{\ast}\sup_{|\varphi(a)|> r}\left(\alpha(a)+\beta(a)+\Vert \wco\Vert_{L(BMOA)}/\sqrt{L(\varphi(a))}\right).
\end{align*}
To estimate the other term, we use the triangle inequality and get
\begin{align*}
\sup_{|\varphi(a)|\le r}\Vert \transf{\wco S_n f}{a} \Vert_2 
&\le
\sup_{|\varphi(a)|\le r}|(S_n f)(\varphi(a))|\Vert \transf{\psi}{a} \Vert_2 \\
&\quad+
\sup_{|\varphi(a)|\le r}\Vert \psi\circ\sigma_a \cdot \transf{S_n f\circ\varphi}{a}\Vert_2\\
&\le \Vert \psi \Vert_{\ast}\max_{w\in Q(r,t)}|(S_n f)(w)| + I_1^{1/2} + I_2^{1/2},
\end{align*}
where
\begin{align*}
I_1 &=\sup_{|\varphi(a)|\le r}\int_{\UnitCircle\setminus E(\varphi,a,t)} | (\psi\circ\sigma_a)(\e^{i\theta}) \transf{S_n f\circ\varphi}{a}(\e^{i\theta}) |^2 \,\frac{\mathrm d\theta}{2\pi};\\
I_2 &=\sup_{|\varphi(a)|\le r}\int_{E(\varphi,a,t)} | (\psi\circ\sigma_a)(\e^{i\theta}) \transf{S_n f\circ\varphi}{a}(\e^{i\theta}) |^2 \,\frac{\mathrm d\theta}{2\pi}.
\end{align*}
Abbreviate $\varphi_a=\sigma_{\varphi(a)}\circ\varphi\circ\sigma_a$ and 
note that $\transf{S_n f\circ\varphi}{a} = (S_n f)\circ\sigma_{\varphi(a)}\circ\varphi_a-(S_n f\circ\varphi)(a)$, so that
\begin{align*}
|\transf{S_n f\circ\varphi}{a}(\xi)|\le 2 |\varphi_a(\xi)|\sup_{|w|\le t}| ((S_n f)\circ\sigma_{\varphi(a)})(w)-(S_n f)(\varphi(a))|,
\end{align*}
for $\xi\in \UnitCircle\setminus E(\varphi, a, t)$  by Lemma \ref{lemmaweightedlwie}. Hence
\begin{align*}
I_1 &\le
4\sup_{|\varphi(a)|\le r}\sup_{|w|\le t}| ((S_n f)\circ\sigma_{\varphi(a)})(w)-(S_n f)(\varphi(a))|^2 \Vert \psi\circ\sigma_a \cdot \varphi_a \Vert_2^2\\
&\le 
16\sup_{z\in Q(r,t)}|(S_n f)(z)|^2 \Vert \psi\circ\sigma_a \cdot \varphi_a \Vert_2^2,
\end{align*}
where 
\begin{align*}
\Vert \psi\circ\sigma_a \cdot \varphi_a \Vert_2 &\le \Vert \transf{\psi}{a} \Vert_2 \Vert \varphi_a \Vert_\infty + |\psi(a)|\Vert \varphi_a\Vert_2 \\
& \lesssim \Vert \psi \Vert_{\ast} + \alpha(a) \lesssim \Vert \wco \Vert_{L(BMOA)}.
\end{align*}

For $I_2$ we use  H\"older's inequality;
$$I_2\le \sup_{|\varphi(a)|\le r}\left(\int_{E(\varphi,a,t)} |\psi(\sigma_a(\e^{i\theta}))|^4 \, \frac{\mathrm d\theta}{2\pi}\right)^{1/2}
\sup_{|\varphi(a)|\le r}\Vert \transf{S_n f\circ\varphi}{a} \Vert_4^2,$$
where
$$\Vert \transf{S_n f\circ\varphi}{a} \Vert_4^2\le  \Vert S_nf\circ\varphi \Vert ^2_{\ast,4}\lesssim  \Vert f \Vert ^2_{\ast}\le 1,$$
by Lemma \ref{lemmajn}. 
By combining the above estimates with \eqref{eq_claim1}, we have for $r\in (0,1)$ and $t\in(\frac{1}{2},1)$ that
\begin{align*}
\Vert \wco S_n f\Vert_{\ast} &\lesssim \sup_{|\varphi(a)|> r}\left(\alpha(a)+\beta(a)+\Vert \wco\Vert_{L(BMOA)}/\sqrt{L(\varphi(a))}\right)\\
&\quad + 
\sup_{|\varphi(a)|\le r}\left(\int_{ E(\varphi,a,t)} |\psi(\sigma_a(\e^{i\theta}))|^4 \,
 \frac{\mathrm d\theta}{2\pi}\right)^{1/4}\\
&\quad +
\sup_{z\in Q(r,t)}|(S_n f)(z)|^2 \Vert \wco \Vert_{L(BMOA)}.
\end{align*}
By taking the supremum over $\Vert f \Vert \le 1$, letting $n\to \infty$ and applying \eqref{eqfinal1}, we get
\begin{align*}
\Vert\wco\Vert_{e,BMOA} &\lesssim \sup_{|\varphi(a)|> r}\left(\alpha(a)+\beta(a)+\Vert \wco\Vert_{L(BMOA)}/\sqrt{L(\varphi(a))}\right)\\
&\quad + 
\sup_{|\varphi(a)|\le r}\left(\int_{ E(\varphi,a,t)} |\psi(\sigma_a(\e^{i\theta}))|^4 \,
\frac{\mathrm d\theta}{2\pi}\right)^{1/4}.
\end{align*}
From Lemma \ref{lemmaupperest} (i) and Lemma \ref{lemmaupperest2} we conclude that 
\begin{align*}
\limsup_{|\varphi(a)| \to 1}\alpha(a) \lesssim \limsup_{n\to\infty} \Vert \psi \varphi^n \Vert_{\ast}.
\end{align*}
Therefore, by  combining the above estimates with Lemma \ref{lemmaupperest1}, we see that 
$$ \Vert \wco \Vert_{e,BMOA} \lesssim \limsup_{|\varphi(a)|\to 1}\beta(a)+\limsup_{n\to\infty}  \Vert \psi\varphi^n \Vert_{\ast},$$
and the proof is complete.
\end{proof}

We finally prove Corollary \ref{corend}.

\begin{proof}[Proof of Corollary \ref{corend}] First we note that $VMOA^{**} = BMOA$ and $\wco$ on $BMOA$ is the second adjoint of $\wco$ on $VMOA$. Moreover, $(VMOA)^* = H^1$ has the metric approximation property, so  Theorem 3 in \cite{axler} gives us that the essential norms $ \Vert \wco \Vert_{e,BMOA}$ and $ \Vert \wco \Vert_{e,VMOA}$ are equivalent.
Since $\psi$ and $\psi\varphi$ belong to $VMOA$, we conclude (see Proposition 4.1 in \cite{lai2}) that
$$\limsup_{|a|\to 1}\Vert \psi\circ\sigma_a - \psi(a)\Vert_2=0 \quad \text{and} \quad \limsup_{|a|\to 1} |\psi(a)| \Vert
\varphi\circ\sigma_a - \varphi(a)\Vert_2=0.$$
Let $r\in (0,1)$ be fixed.  Since  
\begin{align*}
\|\sigma_{\varphi(a)}\circ\varphi\circ\sigma_a\|_2 = \left\Vert \frac{\varphi(a)-\varphi\circ\sigma_a}{1-\overline{\varphi(a)}\varphi\circ\sigma_a} \right\Vert_2 \le \frac{\|\varphi\circ\sigma_a - \varphi(a)\|_2}{1 -|\varphi(a)|},
\end{align*}
we get 
\begin{align*}
\lim_{s\to 1} \sup_{|\varphi(a)|\le r, |a|>s}& |\psi(a)| \|\sigma_{\varphi(a)}\circ\varphi\circ\sigma_a\|_2 \\
&\le (1 -r)^{-1} \limsup_{|a|\to 1} |\psi(a)| \|\varphi\circ\sigma_a - \varphi(a)\|_2 = 0
\end{align*}
and 
\begin{align*}
\lim_{s\to 1}\sup_{|\varphi(a)|\le r, |a|>s}& \left(\log\frac{2}{1 - |\varphi(a)|^2}\right)
\Vert \psi\circ\sigma_a - \psi(a)\Vert_2
\\&\le \left(\log \frac{2}{1 -r^2}\right) \limsup_{|a|\to 1}\Vert \psi\circ\sigma_a - \psi(a)\Vert_2=0.
\end{align*}
Hence,
$$\limsup_{|a|\to 1} \alpha(a) \le \lim_{s\to 1} \left( \sup_{|\varphi(a)|\le r, |a|>s} \alpha(a) + 
\sup_{|\varphi(a)| > r, |a|>s}\alpha(a)\right) \le \sup_{|\varphi(a)|>  r} \alpha(a)$$
and, similarly,
$$\limsup_{|a|\to 1} \beta(a)  \le \sup_{|\varphi(a)|>  r} \beta(a).$$
Therefore we can use the essential norm estimate \eqref{eq_ess_norm_vmoa} for $VMOA$ to conclude that 
$$ \Vert \wco \Vert_{e,BMOA}\lesssim \limsup_{|\varphi(a)|\to 1} \alpha(a) + \limsup_{|\varphi(a)|\to 1}\beta(a).$$
On the other hand, 
\begin{align*}
\limsup_{|\varphi(a)| \to 1}\alpha(a) \lesssim \limsup_{n\to\infty} \Vert \psi \varphi^n \Vert_{\ast},
\end{align*}
so Theorem \ref{mainthm2} finishes the proof.
\end{proof}


\end{document}